\def\N{{\mathbb N}}
\def\R{{\mathbb R}}
\newtheorem{thm}{Theorem}[section]
\newtheorem{lem}[thm]{Lemma}
\newtheorem{prop}[thm]{Proposition}
\theoremstyle{definition}
\newtheorem{de}[thm]{Definition}
\theoremstyle{remark}
\newtheorem{rem}[thm]{Remark}
\numberwithin{equation}{section}
\begin{document}

\title[Invariant measures for stochastic Burgers equation on unbounded domains]
{Invariant measures for stochastic Burgers equation on unbounded domains}

\author{Zhenxin Liu}
\address{Z. Liu: School of Mathematical Sciences,
Dalian University of Technology, Dalian 116024, P. R. China}
\email{zxliu@dlut.edu.cn}

\author{Zhiyuan Shi}
\address{Z. Shi (Corresponding author): School of Mathematical Sciences,
Dalian University of Technology, Dalian 116024, P. R. China}
\email{zyshi0@outlook.com}


\makeatletter
\@namedef{subjclassname@2020}{\textup{2020} Mathematics Subject Classification}
\makeatother

\subjclass[2020]{35Q35, 35R60, 60H15}

\keywords{stochastic Burgers equation, unbounded domain, invariant measure}
\begin{abstract}
In this paper, we investigate the stochastic damped Burgers equation with multiplicative noise defined on the entire real line. We demonstrate the existence and uniqueness of a mild solution to the stochastic damped Burgers equation and establish that the solution is uniformly bounded in time.
  Furthermore, by employing the uniform estimates on the tails of the solution, we obtain the tightness of a family of probability distributions of the solution. Subsequently, by applying the Krylov-Bogolioubov theorem, we establish the existence of invariant measures.

\end{abstract}
\maketitle
\section{Introduction}
\setcounter{equation}{0}
The stochastic Burgers equation can be used as a simple model for turbulence. Over the past few decades,  it has found extensive applications in diverse fields, including fluid dynamics, statistical physics, etc. Here, we study the long-time behaviour of stochastic Burgers equations of the form:
\begin{equation}\label{sde}
\dfrac{\partial u(t,x)}{\partial t}=\dfrac{\partial^2 u(t,x)}{\partial x^2}-ku(t,x)
-uu_x(t,x)+\sigma(u(t,x))\dot{W}(t,x),
\end{equation}
where $t>0$ and $x\in \R$. We investigate the existence and uniqueness of solution of equation \eqref{sde}, as well as the existence of invariant measures.

The well-posedness of equation \eqref{sde} is a prerequisite for investigating the existence of invariant measures. Numerous studies have examined the stochastic Burgers equation in both bounded and unbounded domains (cf.\cite{Be,t,f,g,h} and references therein). Specifically, in bounded domains, Gy\"{o}ngy \cite{f} considered the Dirichlet problem for equation \eqref{sde} driven by space-time white noise. Gy\"{o}ngy and Rovira \cite{h} investigated the $L^p$ solution for equation \eqref{sde} by considering nonlinear terms with polynomial growth and a more regular noise on the interval [0,1]. For unbounded domains, Gy\"{o}ngy and Nualart \cite{g} proved the existence and uniqueness of a mild solution to equation \eqref{sde} with a bounded diffusion term. They established the existence and uniqueness of a local solution via a fixed-point argument and then derived the necessary estimates for global existence by introducing an auxiliary function.

There are also numerous studies focusing on investigating the existence and uniqueness of invariant measures for equation \eqref{sde}. In the case of bounded domains,  Da Prato et al.\cite{t} proved the existence and uniqueness of an $L^p (D)$-valued solution, as well as the existence of invariant measures
for $k=0$ in equation \eqref{sde} when $D = [0, 1]$. Subsequently, Da Prato and G\c{a}tarek \cite{af} extended these results from additive noise to multiplicative noise. E et al \cite{e} examined the non-viscous case, analyzing the convergence of the invariant measures as the viscosity coefficient tends to zero. Additionally, Dong and Zhang \cite{q} studied the Dirichlet problem associated with equation \eqref{sde}, establishing the existence and uniqueness of  invariant measures. In \cite{Go}, Goldys and Maslowski studied ergodic behaviour of the stochastic Burgers equation.  For unbounded domains, Kim \cite{o} investigated the Cauchy problem for the stochastic Burgers equation with additive noise, proving the existence of invariant measures in a suitable space.  For gradient form noise,  the stochastic Burgers equation is closely related to the KPZ equation, the Hamilton-Jacobi equation, and the heat equation via the Hopf-Cole transformation. In this case, Bakhtin and Li \cite{Ba} developed an ergodic theory for the Burgers equation with positive viscosity and random kick forcing on the real line. Dunlap et al \cite{j} investigated the invariant measures for equation \eqref{sde} on the entire real line for $k=0$, where the equation is forced by the derivative of a Gaussian noise that is white in time and smooth in space. Motivated by these works, we focus on studying the stochastic Burgers equation \eqref{sde}  in unbounded domains.

Specifically, in this paper, we investigate equation \eqref{sde} driven by multiplicative noise that is white in time and colored in space on the real line. The noise coefficient $\sigma$ satisfies a linear growth condition. We do not assume that the initial condition and the noise are in gradient form. First we establish the existence of a unique local mild solution to \eqref{sde} by using a truncated system and fixed point arguments. Subsequently, by employing energy estimates for the local solution and using stopping time techniques, we prove the global existence of the solution in $L^p$ space.

After establishing the existence of a unique solution to equation \eqref{sde}, we proceed to examine the existence of invariant measures for this equation in $L^2(\R)$. Since equation \eqref{sde} is defined on the unbounded domain $\R$, several challenges arise. First, the Poincar\'{e} inequality cannot be applied; therefore, we consider adding a damping term to ensure that the solution with an expectation value is uniformly bounded in time. Second, the non-compactness of the usual Sobolev embeddings on unbounded domains making it difficult to derive the tightness of probability distributions of solutions.
To address this, several methods exist for proving the existence of invariant measures for SPDEs on unbounded domains, including weighted spaces \cite{As,Ec,Mi,Tes}, weak Feller approach \cite{Br,Brz} and uniform tail-estimates \cite{Ki, Wang}. In this paper, we apply the idea of uniform tail-estimates to demonstrate the tightness of probability distributions of solutions in $L^2(\R)$. We will first establish the uniform smallness of solutions outside a sufficiently large ball in $\R$, and then, by combining these estimates and the compactness of embeddings in bounded domains, we derive the tightness of the probability distributions of the solutions in $L^2(\R)$. Finally, by the Krylov-Bogolioubov theorem,  the existence of the invariant measures is obtained.

The paper is organized as follows. In Section 2, we  introduce some fundamental definitions, assumptions, and main results. In Section 3, we prove the existence and uniqueness of the solution. Finally, in section 4, we present several crucial lemmas and obtain the existence of invariant measures.
\section{Preliminaries and main results}
\subsection{Notations and assumptions}
Throughout this paper, we employ the following notation: $L^p(\R)$ denotes the Lebesgue space with norm  $\|\cdot\|_{L^p}$, and $W^{m,p}$ represents the usual Sobolev spaces, where we write $H^m:=W^{m,2}$ for simplicity. Furthermore, we denote ${B}_{b}(L^2(\R))$ as the space of bounded measurable functions on $L^2(\R)$, and $C_{b}(L^2(\R))$  as the space of continuous bounded functions on $L^2(\R)$.

Consider the following equation
\begin{equation}\label{2.1}
d u(t,x)=\left(\dfrac{\partial^2 u(t,x)}{\partial x^2}-ku(t,x)
-\frac{1}{2}\dfrac{\partial u^2(t,x)}{\partial x}\right)dt+\sigma(u(t,x))dW(t),\text{ } t>0, x\in \R
\end{equation}
with initial condition $u(0,x)=u_{0}(x)$. Let $\{e_j \}_{j=1}^{\infty}$ be an orthonormal basis in $L^2(\R)$ such that
$$ \sup _{j}\|e_j(x)\|_{L^{\infty}(\R)}\leq 1.$$
We now define the Wiener process $W (t)$ as follows: $$W(t)=\sum_{j=1}^{\infty}a_j\beta_j(t)e_j, $$ where $\{\beta_j(t)\}_{j\in \mathbb{N}}$ is a sequence of independent standard
Brownian motions on the probability space $(\Omega,\mathcal{F}, P)$ and $\{a_j^2\}_{j=1}^{\infty}$ is  a sequence of nonnegative real numbers satisfying
$$a:=\sum_{j=1}^{\infty}a_j^2<\infty.$$

We introduce the following hypotheses on the noise coefficient $\sigma$:
\begin{enumerate}
  \item[($H1$)] $\sigma$ is a measurable real function and satisfies the following condition:
  there exists constant $l>0$  such that for all $u\in \R$,
\begin{equation}\nonumber
\vert \sigma(u)\vert\leq l|u|.
\end{equation}
\item[($H2$)] There exists constant $L>0$ such that for all $u, v \in \R$,
\begin{equation}\nonumber
\vert \sigma(u)-\sigma(v)\vert \leq L|u-v|.
\end{equation}
\end{enumerate}
Consider the heat kernel
$$
G(t, x-y)=\frac{1}{\sqrt{4 \pi t}} \exp \left(-\frac{|x-y|^2}{4 t}\right).
$$
\begin{de}
We say that an $L^p(\R)$-valued and $\mathcal{F}_t$-adapted stochastic process $u(t,x)$ is a mild solution to \eqref{2.1} if for all $t\in [0,T]$ and almost all $x\in \R$,
\begin{align*}
u(t,x)=&\int_{\R}G(t,x-y)u_{0}(y)dy-k\int_{0}^{t}\int_{\R}G(t-s,x-y)u(s,y)dyds\\
&+\frac{1}{2}\int_{0}^{t}\int_{\R}\frac{\partial G }{\partial y}(t-s,x-y) u^2(s,y)dyds\\
&+\int_{0}^{t}\int_{\R}G(t-s,x-y)\sigma(u(s,y))dydW(s)\quad a.s.
\end{align*}
\end{de}
\begin{rem}
An $L^p(\R)$-valued and $\mathcal{F}_t$-adapted stochastic process $u(t,x)$ is a mild solution to \eqref{2.1} if and
only if for every second order differentiable function $\varphi $ with compact support in $\R$,
$$
\begin{aligned}
\int_{\mathbb{R}} u(t, x) \varphi(x) d x= & \int_{\mathbb{R}} u_0(x) \varphi(x) d x+\int_0^t \int_{\mathbb{R}} u(s, x) \varphi^{\prime \prime}(x) d x d s \\
& -\int_0^t \int_{\mathbb{R}} ku(s, x)\varphi(x) d x d s  +\int_0^t \int_{\mathbb{R}} u^2(s, x) \varphi^{\prime}(x) d x d s \\
& +\int_0^t \int_{\mathbb{R}} \sigma( u(s, x)) \varphi(x) dxdW( s),
\end{aligned}
$$
where $(t, x) \in[0, T] \times \mathbb{R} .$ The followings are our main results of this paper.
\end{rem}
\subsection{Main results}
\begin{thm}\label{solution}
Assume that $u_0\in L^p(\R)$, $(H1)-(H2)$ hold and $k\in\R$. Then there is a unique mild solution $u(t,x)$ to equation \eqref{2.1} such that for $p\geq 2$,
 \begin{equation}
E\sup\limits_{t\in[0,T]} \|u(t,x)\|^{p}_{L^p}\leq C(T)(1+\|u_{0}\|^p_{L^p}).
 \end{equation}

\end{thm}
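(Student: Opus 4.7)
The strategy is to combine a truncation-based local existence argument with a uniform $L^p$ energy estimate to obtain a global solution together with the asserted bound, in the spirit of the approach of Gy\"ongy--Nualart \cite{g}. There are three stages: solve a cut-off equation globally in time by a Banach fixed point; prove an $L^p$ bound that is uniform in the cut-off parameter using the conservative structure of the Burgers nonlinearity; and remove the cut-off via a stopping-time argument.

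\emph{Step 1 (Truncation and local fixed point).} Fix $N\in\N$, pick a smooth cut-off $\psi_N:[0,\infty)\to[0,1]$ with $\psi_N\equiv 1$ on $[0,N]$ and $\psi_N\equiv 0$ on $[2N,\infty)$, and set $F_N(u):=\psi_N(\|u\|_{L^p})u^2$. On the Banach space $\mathcal X_T^p$ of $(\mathcal F_t)$-predictable $L^p(\R)$-valued continuous processes equipped with the norm $\|u\|_{\mathcal X_T^p}:=(E\sup_{t\in[0,T]}\|u(t)\|_{L^p}^p)^{1/p}$, I define the map $\Gamma$ by replacing $u^2$ in the mild formula of the definition with $F_N(u)$. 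Young's inequality $\|G_\tau\ast f\|_{L^p}\le\|f\|_{L^p}$ handles the initial-data and damping terms. For the nonlinear term the estimate $\|\partial_x G_\tau\|_{L^{p/(p-1)}}\le C\tau^{-(p+1)/(2p)}$ together with $\|F_N(u(s))\|_{L^{p/2}}\le(2N)^2$ and Young's inequality (valid for $p\ge 2$) yields a bound of order $N^2 T^{(p-1)/(2p)}$ which vanishes with $T$. The stochastic convolution is handled by a standard $L^p$ maximal inequality (factorization method) together with (H1), $\sup_j\|e_j\|_\infty\le1$ and $\sum a_j^2<\infty$. Differences $\Gamma u-\Gamma v$ are treated identically, using (H2) and the Lipschitz property of $F_N$ on bounded sets. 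Hence $\Gamma$ is a strict contraction on a sufficiently short time interval; iterating on successive intervals produces a unique $u^N\in\mathcal X_T^p$ solving the truncated equation on $[0,T]$.

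\emph{Step 2 (Uniform $L^p$ bound).} The heart of the argument is the cancellation of the Burgers nonlinearity at the level of $L^p$: formally applying It\^o's formula to $\|u^N(t)\|_{L^p}^p$, the deterministic contribution of the nonlinearity is
\[
-\tfrac{p}{2}\int_\R|u^N|^{p-2}u^N\,\partial_x(u^N)^2\,dx
=-\tfrac{p}{p+1}\int_\R\partial_x\bigl(u^N|u^N|^p\bigr)\,dx=0,
\]
the Laplacian term produces the non-positive $-p(p-1)\int|u^N|^{p-2}|\partial_x u^N|^2\,dx$, the damping gives $-pk\|u^N\|_{L^p}^p$, and the It\^o trace term is controlled by $\tfrac{p(p-1)l^2 a}{2}\|u^N\|_{L^p}^p$ via (H1) and $\sum a_j^2<\infty$. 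Taking expectation, applying BDG to the martingale part and Gronwall's inequality, one obtains
\[
E\sup_{t\in[0,T]}\|u^N(t)\|_{L^p}^p\le C(T)\bigl(1+\|u_0\|_{L^p}^p\bigr)
\]
with $C(T)$ independent of $N$.

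\emph{Step 3 (Removing the cut-off and uniqueness).} Define $\tau_N:=\inf\{t\ge 0:\|u^N(t)\|_{L^p}\ge N\}\wedge T$. On $[0,\tau_N]$, $\psi_N\equiv 1$, so $u^N$ solves \eqref{2.1} itself, and uniqueness in Step 1 forces $u^{N+1}\equiv u^N$ on $[0,\tau_N]$. Chebyshev together with the uniform bound of Step 2 gives $P(\tau_N<T)\le C(T)(1+\|u_0\|_{L^p}^p)/N^p\to 0$, so the pointwise limit $u(t):=\lim_{N\to\infty}u^N(t\wedge\tau_N)$ is a global mild solution of \eqref{2.1}, and Fatou's lemma transports the bound to $u$. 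Pathwise uniqueness then follows by localizing two candidate solutions with a common stopping time and reapplying the contraction estimate of Step 1.

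\emph{Main obstacle.} The delicate point is the rigorous justification of the It\^o identity in Step 2: a mild solution is only $L^p$-valued and has no a priori spatial derivatives, so the pointwise manipulation leading to the cancellation cannot be invoked directly. The remedy, as in \cite{g}, is to regularize $u^N$ by the heat semigroup, applying It\^o to $\|G_\varepsilon\ast u^N(t)\|_{L^p}^p$, controlling the commutator $G_\varepsilon\ast\partial_x(u^N)^2-\partial_x(G_\varepsilon\ast u^N)^2$ via heat-kernel estimates, and passing $\varepsilon\downarrow 0$ so that the conservative cancellation survives in the limit. Every other ingredient is a routine application of Young's inequality, BDG and Gronwall.
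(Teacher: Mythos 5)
Your proposal is correct and follows essentially the same route as the paper: truncate the nonlinearity, solve the cut-off equation by a Banach fixed point, derive an $N$-uniform $L^p$ bound via It\^o's formula exploiting the cancellation $\int|u|^{p-2}u\,\partial_x(u^2)\,dx=0$ together with BDG and Gronwall, and remove the cut-off by a Chebyshev estimate on the stopping times. The only cosmetic differences are your scalar cut-off $\psi_N(\|u\|_{L^p})u^2$ versus the paper's radial projection $\pi_N u$, your short-interval iteration versus the paper's exponentially weighted norm for the contraction, and your heat-semigroup regularization to justify It\^o's formula where the paper simply invokes Krylov's $L_p$-It\^o formula.
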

For $p=2$, we introduce the transition semigroup $p_{_{t}}$ associated with the solution $u(t,x)$ of equation \eqref{2.1}. For $\phi \in {B}_{b}(L^2(\R))$, we write
\begin{align*}
p_{_{t}}\phi(u_0)=E\phi(u(t, x, u_0))
\end{align*}
for any $t\geq 0$.
Its dual operator $p_{_{t}}^{*}$ acting on the space $\mathcal{P}(L^2(\R))$ of probability measures on $L^2(\R)$ is defined by
\begin{align*}
p_{_{t}}^{*}\mu (\Gamma)=\int_{L^2(\R)}p_{_{t}}(x,\Gamma)\mu(dx),
\end{align*}
for any $t\geq 0$, $\Gamma\in\mathcal{B}(L^2(\R))$ and $\mu\in\mathcal{P}(L^2(\R))$.
\begin{de}
The transition semigroup $p_{_{t}}$ is Feller, if $p_{_{t}}: C_{b}(L^2(\R))\to C_{b}(L^2(\R))$ for $t>0$.
\end{de}

\begin{de}
A probability measure $\mu\in \mathcal{P}(L^2(\R))$ is called an invariant measure with respect to $p_{_{t}}$, if and only if $p_{_{t}}^* \mu=\mu $
for all $t\geq 0$.
\end{de}
\begin{thm}\label{inva}
 Assume that $u_0\in L^2(\R)$, $(H1)-(H2)$ hold, $k>0$ and $al^2<\frac{3}{7}k$. Then there exists an invariant measure for equation \eqref{2.1} on $L^2(\R)$.
\end{thm}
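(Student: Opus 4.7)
The proof proceeds by the Krylov--Bogolioubov method: I would produce an invariant measure as a weak subsequential limit in $\mathcal{P}(L^2(\R))$ of the Ces\`{a}ro averages $\mu_T := \frac{1}{T}\int_0^T p_t^{*}\delta_{u_0}\,dt$ for a fixed $u_0 \in L^2(\R)$. Two ingredients are required: the Feller property of $p_t$ on $L^2(\R)$, which should follow from standard continuous dependence of the mild solution on its initial datum (in the spirit of the fixed-point construction used for Theorem~\ref{solution}); and the tightness of $\{\mu_T\}_{T \geq 1}$ in $L^2(\R)$, which is the main analytic content.

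The first step is a uniform-in-time $L^2$ energy estimate. Applying It\^{o}'s formula to $\|u(t)\|_{L^2}^2$, using the antisymmetry identity $\int_{\R} u^2 u_x\,dx = \tfrac{1}{3}\int_{\R}\partial_x(u^3)\,dx = 0$ that kills the Burgers nonlinearity, and bounding the It\^{o} correction via $\sum_j a_j^2 e_j(x)^2 \leq a$ combined with $(H1)$, one obtains
\begin{equation*}
\frac{d}{dt}E\|u(t)\|_{L^2}^2 \leq -(2k - al^2)\,E\|u(t)\|_{L^2}^2 - 2E\|u_x(t)\|_{L^2}^2.
\end{equation*}
Since $al^2 < \tfrac{3}{7}k < 2k$, Gr\"{o}nwall yields $\sup_{t\geq 0}E\|u(t)\|_{L^2}^2 < \infty$ and the time-averaged estimate $\sup_{T \geq 1}\tfrac{1}{T}\int_0^T E\|u_x(s)\|_{L^2}^2\,ds < \infty$. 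An analogous It\^{o} computation for $\|u(t)\|_{L^p}^p$ (whose Burgers term also vanishes by an identical integration by parts) combined with Theorem~\ref{solution} provides a uniform-in-$t$ bound on $E\|u(t)\|_{L^3}^3$ needed in the next step; the numerical threshold $\tfrac{3}{7}k$ should emerge here from balancing the Gr\"{o}nwall decay rates in the $L^2$ and $L^3$ estimates against the cubic error term produced in the tail estimate below.

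The core step is the uniform tail estimate. Fix a smooth cutoff $\chi \in C^\infty(\R)$ with $\chi = 0$ on $[-1,1]$ and $\chi = 1$ outside $[-2,2]$, and set $\chi_R(x) := \chi(x/R)$, so that $|\chi_R'| \leq C/R$ and $|\chi_R''| \leq C/R^2$. Applying It\^{o} to $\int_{\R}\chi_R u^2\,dx$: after integration by parts the Laplacian contributes $\int \chi_R'' u^2 - 2\int \chi_R u_x^2$, the damping gives $-2k\int\chi_R u^2$, the Burgers term becomes $-2\int\chi_R u^2 u_x = \tfrac{2}{3}\int \chi_R' u^3$, and the noise correction is at most $al^2\int \chi_R u^2$. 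Combining these ingredients yields
\begin{equation*}
\frac{d}{dt}E\int_{\R}\chi_R u^2\,dx \leq -(2k - al^2)\,E\int_{\R}\chi_R u^2\,dx + \frac{C}{R^2}E\|u\|_{L^2}^2 + \frac{C}{R}E\|u\|_{L^3}^3,
\end{equation*}
and by the uniform moment bounds of the previous step the error terms are $O(1/R)$ uniformly in $t$. Gr\"{o}nwall then gives $\sup_{t \geq 0} E\int_{|x|\geq 2R} u(t,x)^2\,dx \to 0$ as $R \to \infty$.

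To complete tightness, combine the tail estimate with local $H^1$-compactness: the time-averaged bound on $E\|u(s)\|_{H^1(-R,R)}^2$ together with the Rellich--Kondrachov compact embedding $H^1(-R,R) \subset\subset L^2(-R,R)$ implies that the restriction of $\mu_T$ to $L^2(-R,R)$ is tight for each $R$, while the uniform tail estimate enables a diagonal construction of a compact subset of $L^2(\R)$ carrying mass at least $1-\varepsilon$ of every $\mu_T$. Together with the Feller property, Krylov--Bogolioubov then produces an invariant measure as a weak limit of a subsequence. The principal obstacle is the tail estimate: the cubic commutator $\tfrac{2}{3}\int \chi_R' u^3$ generated by the Burgers nonlinearity must be absorbed via a uniform-in-time $L^3$-moment, which demands a sharper damping/noise balance than the bare $L^2$ estimate alone, and this is precisely where the hypothesis $al^2 < \tfrac{3}{7}k$ is expected to enter in full strength.
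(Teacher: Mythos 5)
Your overall architecture (uniform $L^2$ moment bounds, a cutoff tail estimate, local $H^1$ compactness plus the tail bound for tightness, the Feller property, and Krylov--Bogolioubov applied to Ces\`aro averages) is exactly the paper's, and your $L^2$ energy estimate and tightness assembly are sound. The gap is in how you propose to absorb the cubic commutator $\tfrac{2}{3}\int\chi_R'\,u^3$ in the tail estimate. You want a \emph{uniform-in-time} bound on $E\|u(t)\|_{L^3}^3$, obtained ``from an analogous It\^o computation combined with Theorem~\ref{solution}.'' Neither source delivers it: Theorem~\ref{solution} gives only $E\sup_{t\le T}\|u(t)\|_{L^p}^p\le C(T)(1+\|u_0\|_{L^p}^p)$, with a $T$-dependent constant and under the hypothesis $u_0\in L^p$, while an It\^o computation for $\|u(t)\|_{L^3}^3$ would in any case need $\|u_0\|_{L^3}<\infty$ --- but the theorem assumes only $u_0\in L^2(\R)$, and on the whole line $L^2\not\subset L^3$. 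So the quantity you need to be $O(1/R)$ uniformly in $t$ is not available, and your Gr\"onwall step does not close.

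The paper's resolution is different, and it is exactly where the constant $\tfrac{3}{7}$ comes from. One interpolates via Agmon's inequality $\|u\|_{L^\infty}\le C\|u\|_{L^2}^{1/2}\|u_x\|_{L^2}^{1/2}$ to get
\[
\|u\|_{L^3}^3\le\|u\|_{L^2}^{5/2}\|u_x\|_{L^2}^{1/2}\le\tfrac{3}{4}\|u\|_{L^2}^{10/3}+\tfrac{1}{4}\|u_x\|_{L^2}^2 .
\]
The first piece is controlled uniformly in time by the moment bound $E\|u(t)\|_{L^2}^{p}\le\|u_0\|_{L^2}^{p}$ of Lemma~\ref{bound} with $p=\tfrac{10}{3}$, which holds precisely when $al^2<k/(p-1)=\tfrac{3}{7}k$; the second piece carries the prefactor $C/R$ but is \emph{not} bounded pointwise in $t$, so after multiplying the differential inequality by $e^{(2k-al^2)t}$ and integrating one invokes the dissipation identity $\int_0^t e^{(2k-al^2)(s-t)}E\|u_x(s)\|_{L^2}^2\,ds\le\tfrac{1}{2}\|u_0\|_{L^2}^2$ from \eqref{4.12} to make its contribution small for large $R$. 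Without this interpolation (or some substitute that works from $L^2$ data alone), your tail estimate, and hence the tightness, is not established. A minor further point: the Feller property you appeal to also requires a localization by stopping times, since the Burgers nonlinearity is only locally Lipschitz, but that is routine compared with the gap above.
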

\section{Solution: existence and uniqueness}
We have the following estimates about $G(t,x-y)$ (see e.g. \cite{Fr}):
\begin{enumerate}
\item
For all $ t\in [0,+\infty)$ and $ y\in \R$,
 $$ \int_{\R}G(t,x-y)dy=1,   \int_{\R}G^2(t,x-y)dy=(2\pi t)^{-\frac{1}{2}}; $$
 \item
 For any $m, n\in\N\cup\{0\}$, there exist some constants $K, C>0$ such that
$$
\Big|\dfrac{\partial^m}{\partial t^m} \frac{\partial^n}{\partial y^n} G(t, x-y)\Big|
\leq K t^{-\frac{1+2 m+n}{2}} \exp{\Big(-\frac{C|x-y|^2}{t}\Big)},
$$
for all $ t >0$ and $x, y \in \mathbb{R}$.
\end{enumerate}
Based on  property (2), we can find positive constants $K, C_1, C_2, C_3$ such that
\begin{equation}
\Big|\frac{\partial G}{\partial t}(t, x-y)\Big|  \leq K t^{-3 / 2} \exp \Big(-C_1 \frac{|x-y|^2}{t}\Big),
\end{equation}

\begin{equation}\label{88}
\Big|\frac{\partial G}{\partial y}(t, x-y)\Big| \leq K t^{-1} \exp \Big(-C_2 \frac{|x-y|^2}{t}\Big),
\end{equation}
and
\begin{equation}
\Big|\frac{\partial^2 G}{\partial y \partial t}(t, x-y)\Big|  \leq K t^{-2} \exp \Big(-C_3 \frac{|x-y|^2}{t}\Big),
\end{equation}
for all $t>0$, $x,y\in \R$. Moreover, in the following proof, C will denote a generic constant that may be different from one
formula to another.
Define the operators
\begin{equation}\label{m}
 \left(J_1 v\right)(t, x):=\int_0^t \int_{{\R}} G(t-s, x-y) v(s, y) d y d s,
 \end{equation}
\begin{equation}
\left(J_2 w\right)(t, x):=\displaystyle\int_0^t \displaystyle\int_{{\R}} \dfrac {\partial G}{\partial y}(t-s, x-y) w(s, y) d y d s,
\end{equation}
$ t\in [0,T]$, $x\in \R$, where $v,w\in L^\infty([0,T];L^{p}(\R))$ for some $p\geq 1$.
Lemma \ref{vv} and Lemma \ref{ww} can be find in \cite{g}.
\begin{lem}\label{vv}
  For all $p\ge 1 $ and $\gamma>1$, $J_{1}$ is bounded from $L^\gamma([0,T];L^{p}(\R))$ into $C([0,T];L^{p}(\R))$, and the following estimate holds:
 \begin{equation}\label{v}
 \|(J_{1}v)(t)\|_{L^p}\leq \int_{0}^{t}\| v(s)\|_{L^p} ds.
 \end{equation}
\end{lem}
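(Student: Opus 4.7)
The plan is to exploit the double-integral structure of $J_1$: for fixed $s$, the inner integral is a spatial convolution of the heat kernel $G(t-s,\cdot)$ against $v(s,\cdot)$, so Young's convolution inequality applies; the outer time integral is then handled by Minkowski's integral inequality. Continuity in $t$ will follow from strong continuity of the heat semigroup on $L^p$ together with dominated convergence.

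For the pointwise-in-$t$ norm bound, I would first observe that by property (1) of $G$, $\|G(t-s,\cdot)\|_{L^1(\R)}=1$ for every $s<t$. Young's inequality then gives, for each fixed $s\in[0,t)$,
$$\|G(t-s,\cdot)\ast v(s,\cdot)\|_{L^p}\leq \|G(t-s,\cdot)\|_{L^1}\|v(s,\cdot)\|_{L^p} = \|v(s,\cdot)\|_{L^p}.$$
Applying Minkowski's integral inequality in the $s$ variable to \eqref{m} yields
$$\|(J_1 v)(t,\cdot)\|_{L^p}\leq \int_0^t \|G(t-s,\cdot)\ast v(s,\cdot)\|_{L^p}\,ds \leq \int_0^t \|v(s,\cdot)\|_{L^p}\,ds,$$
which is \eqref{v}. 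Finiteness of this last integral uses Hölder in time together with $\gamma>1$, giving $\int_0^t\|v(s)\|_{L^p}\,ds\leq T^{1-1/\gamma}\|v\|_{L^\gamma([0,T];L^p)}$.

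For the continuity in $t$, I would rewrite $(J_1v)(t) = \int_0^t T(t-s)v(s)\,ds$, where $T(\cdot)$ is the heat semigroup on $L^p(\R)$; this semigroup is a contraction and is strongly continuous for $1\leq p<\infty$. For $0\leq t<t'\leq T$, split
$$(J_1v)(t')-(J_1v)(t) = \int_0^t \bigl[T(t'-s)-T(t-s)\bigr]v(s)\,ds + \int_t^{t'} T(t'-s)v(s)\,ds.$$
The second summand has $L^p$-norm at most $\int_t^{t'}\|v(s)\|_{L^p}\,ds$, which tends to $0$ by absolute continuity of the integral. For the first summand, strong continuity of $T$ gives pointwise convergence to $0$ in $L^p$ for each $s$, while the domination $\|[T(t'-s)-T(t-s)]v(s)\|_{L^p}\leq 2\|v(s)\|_{L^p}\in L^1([0,T])$ legitimizes dominated convergence. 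Left limits are handled symmetrically.

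The argument is essentially elementary; there is no genuine obstacle. The only care point is guaranteeing integrability of $s\mapsto\|v(s)\|_{L^p}$ on $[0,T]$, which is precisely where the hypothesis $\gamma>1$ enters via Hölder. The $L^1$-contraction of the heat convolution and strong continuity of the heat semigroup are standard facts that I would simply quote.
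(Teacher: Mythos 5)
Your argument is correct and is essentially the standard one: the paper itself does not prove this lemma but cites Gy\"ongy--Nualart, where the estimate is obtained exactly as you do, via $\|G(t-s,\cdot)\|_{L^1}=1$, Young's convolution inequality, and Minkowski's integral inequality, with continuity from strong continuity of the heat semigroup. No gaps.
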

\begin{lem}\label{ww}
For all $p\ge 1 $ and $\gamma>\cfrac{4p}{2p-1}$, $J_{2}$ is bounded from $L^\gamma([0,T];L^{p}(\R))$ into $C([0,T];L^{p}(\R))$,
and the following estimate holds:
 \begin{align}\label{w}
 \|(J_{2}w)(t)\|_{L^{2p}}&\leq C\int_{0}^{t}(t-s)^{-1/2-1/(4p)}\|w(s)\|_{L^p}ds\\
 &\leq C\left(\int_{0}^{t}\| w(s)\|_{L^p}^{\gamma} ds\right)^{1/\gamma}.\nonumber
 \end{align}
\end{lem}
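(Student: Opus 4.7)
The plan is to view the space-time operator $J_2$ as a composition: a spatial convolution with the kernel $(\partial G/\partial y)(t-s,\cdot)$ followed by a time integration. For the spatial step I invoke Young's inequality for convolutions, and for the time step H\"older's inequality. The threshold $\gamma>\tfrac{4p}{2p-1}$ will emerge exactly from the integrability of the resulting time singularity, so that exponent arithmetic is the only non-routine ingredient.

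\emph{Spatial convolution estimate.} For fixed $0\le s<t\le T$, pick the conjugate exponent $q$ by Young's identity $1+\tfrac{1}{2p}=\tfrac{1}{q}+\tfrac{1}{p}$, which gives $q=\tfrac{2p}{2p-1}\in(1,2]$. Young's inequality in $y$ yields
\begin{equation*}
\Big\|\int_\R \frac{\partial G}{\partial y}(t-s,\cdot-y)\,w(s,y)\,dy\Big\|_{L^{2p}}\le\Big\|\frac{\partial G}{\partial y}(t-s,\cdot)\Big\|_{L^q}\|w(s)\|_{L^p}.
\end{equation*}
Inserting the Gaussian bound \eqref{88} and rescaling by $z=(x-y)/\sqrt{t-s}$ gives
\begin{equation*}
\Big\|\frac{\partial G}{\partial y}(t-s,\cdot)\Big\|_{L^q}\le C(t-s)^{\frac{1}{2q}-1}=C(t-s)^{-\frac12-\frac{1}{4p}},
\end{equation*}
since $\tfrac{1}{2q}=\tfrac{2p-1}{4p}$. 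Integrating in $s$ and using Minkowski's integral inequality produces the first line of \eqref{w}.

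\emph{Time estimate via H\"older.} Apply H\"older with the conjugate pair $(\gamma,\gamma/(\gamma-1))$:
\begin{equation*}
\int_0^t(t-s)^{-\frac12-\frac{1}{4p}}\|w(s)\|_{L^p}\,ds\le\Big(\int_0^t(t-s)^{-(\frac12+\frac{1}{4p})\frac{\gamma}{\gamma-1}}ds\Big)^{\frac{\gamma-1}{\gamma}}\Big(\int_0^t\|w(s)\|_{L^p}^\gamma\,ds\Big)^{\frac{1}{\gamma}}.
\end{equation*}
The first factor is finite precisely when $\bigl(\tfrac12+\tfrac{1}{4p}\bigr)\tfrac{\gamma}{\gamma-1}<1$, which a short algebraic manipulation shows to be equivalent to $\gamma>\tfrac{4p}{2p-1}$; this is exactly the standing hypothesis, and a residual factor of $T^{\alpha}$ (with $\alpha=\tfrac{1}{\gamma-1}-\tfrac12-\tfrac{1}{4p}>0$) is absorbed into $C$. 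This gives the second inequality in \eqref{w}.

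\emph{Continuity in $t$ and main difficulty.} Membership in $C([0,T];L^p(\R))$ follows from a standard density argument: for smooth compactly supported $w$ one verifies continuity of $t\mapsto(J_2w)(t)$ in $L^p$ by dominated convergence, and then extends to general $w\in L^\gamma([0,T];L^p)$ using the uniform bound just obtained. For the $L^p$ mapping itself one may re-run the same scheme with $q=1$, yielding the milder singularity $(t-s)^{-1/2}$; its integrability against $L^\gamma_t$ requires only $\gamma>2$, which is automatic since $\tfrac{4p}{2p-1}>2$ for $p\ge1$. The only delicate point in the argument is the exponent bookkeeping that converts integrability of the singularity $(t-s)^{-1/2-1/(4p)}$ against an $L^\gamma_t$ function into the sharp condition $\gamma>\tfrac{4p}{2p-1}$; everything else is a direct computation using the Gaussian bound \eqref{88}.
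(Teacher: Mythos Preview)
Your proof is correct. The paper itself does not prove this lemma but merely cites Gy\"ongy and Nualart \cite{g}; your argument via Young's convolution inequality in space (with exponent $q=\tfrac{2p}{2p-1}$) followed by H\"older in time is exactly the standard route and is essentially what one finds in that reference.
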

\begin{lem}\label{vw}
Let $\varphi=\{\varphi(s,y),s\in[0,T], x\in \R\}$ be a progressively measurable process.
Define $$(G\varphi)(t,x)=\int_0^t\int_{\R}G(t-s,x-y)\varphi(s,y)dydW(s).$$  Then for any $p\geq 2$ and $q>2$, we have
\begin{align*}
 E \sup\limits_{t\in [0,T]}\|G\varphi(t)\|_{L^p}^q\leq  CE\int_0^T \|\varphi(s)\|_{L^p}^qds.
\end{align*}
\end{lem}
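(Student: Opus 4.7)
The plan is to apply the Da Prato--Kwapie\'n--Zabczyk factorization method. Since $q > 2$, I can fix $\alpha \in (1/q, 1/2)$. Using the identity $\int_s^t (t-r)^{\alpha-1}(r-s)^{-\alpha}\, dr = \pi/\sin(\pi\alpha)$, the semigroup property of the heat kernel, and a stochastic Fubini argument, one rewrites
\begin{equation*}
(G\varphi)(t,x) = \frac{\sin(\pi\alpha)}{\pi} \int_0^t (t-s)^{\alpha-1} \int_\R G(t-s, x-y)\, Y(s,y)\, dy\, ds,
\end{equation*}
where $Y(s,y) := \int_0^s (s-r)^{-\alpha}\int_\R G(s-r, y-z)\varphi(r,z)\, dz\, dW(r)$.

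The second step is purely deterministic. Convolution with $G(t-s,\cdot)$ is an $L^p$-contraction (Young's inequality together with $\int_\R G\,dy=1$), so taking the $L^p$-norm in $x$ and applying H\"older's inequality in $s$ with conjugate exponents $q$ and $q/(q-1)$ yields
\begin{equation*}
\sup_{t\in[0,T]} \|(G\varphi)(t,\cdot)\|_{L^p}^q \leq C(T,\alpha,q) \int_0^T \|Y(s,\cdot)\|_{L^p}^q\, ds,
\end{equation*}
where the H\"older time integral is finite because $\alpha>1/q$. The task reduces to bounding $E\int_0^T\|Y(s,\cdot)\|_{L^p}^q\, ds$ in terms of $E\int_0^T\|\varphi(r,\cdot)\|_{L^p}^q\, dr$.

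To bound $E\|Y(s,\cdot)\|_{L^p}^q$, I would apply the Burkholder--Davis--Gundy inequality to the $L^p$-valued stochastic integral (exploiting that $L^p$ is UMD for $p\geq 2$), so that the relevant quadratic form is
\begin{equation*}
E\Big\|\Big(\sum_j a_j^2\int_0^s (s-r)^{-2\alpha}\Big(\int_\R G(s-r,\cdot-y)\varphi(r,y)e_j(y)\,dy\Big)^2 dr\Big)^{1/2}\Big\|_{L^p}^q.
\end{equation*}
Using $|e_j|\leq 1$ and $\sum_j a_j^2 = a$, the inner sum is dominated by $a\big(\int_\R G(s-r,x-y)|\varphi(r,y)|\,dy\big)^2$; Minkowski's integral inequality in $L^{p/2}$ (valid since $p\geq 2$) combined with Young's convolution inequality then produces
\begin{equation*}
E\|Y(s,\cdot)\|_{L^p}^q \leq C\, E\Big(\int_0^s (s-r)^{-2\alpha}\|\varphi(r,\cdot)\|_{L^p}^2\, dr\Big)^{q/2}.
\end{equation*}
A Jensen--H\"older step, absorbing the probability density $(s-r)^{-2\alpha}/\int_0^s (s-r)^{-2\alpha}dr$ (legitimate since $q\geq 2$ and $2\alpha<1$), pulls out the $q/2$-power; integrating in $s$ and applying Fubini in $(r,s)$ then closes the bound.

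The main technical obstacle is this last step for general $p\geq 2$: one needs either the $L^p$-valued BDG or a careful Minkowski swap between the spatial $L^p$-norm and the probability expectation, threaded with $|e_j|\leq 1$ so that neither the heat smoothing nor the summability $\sum_j a_j^2<\infty$ is wasted. Keeping $\alpha$ in the window $(1/q,1/2)$, which is nonempty precisely because $q>2$, is the structural constraint that makes the factorization work.
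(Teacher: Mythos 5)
Your proposal is correct and follows essentially the same route as the paper: the Da Prato--Kwapie\'n--Zabczyk factorization with $\alpha\in(1/q,1/2)$, the deterministic $L^p$-contraction plus H\"older/Young step in time, and the Burkholder estimate on $Y$ threaded through $|e_j|\le 1$ and $\sum_j a_j^2=a$. No substantive differences to report.
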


\begin{proof}
Given $\alpha>0$, we can write
\begin{equation}
G\varphi(t,x)=\frac{ \sin \pi \alpha}{\pi}\int_0^t (t-\tau)^{\alpha-1}\int_{\R}G(t-\tau, x-z)Y(\tau, z)dzd\tau,
\end{equation}
where
 $$Y(\tau, z)=\int_{0}^{\tau}\int_{\R}(\tau-s)^{-\alpha}G(\tau-s,z-y)\varphi(s,y)dydW(s).$$
 Using Minkowski's inequality and Young's inequality, we have
 \begin{align*}
 \|G\varphi(t)\|_{L^p}\leq& C\int_0^t (t-\tau)^{\alpha-1}\left\|\int_{\R}G(t-\tau, x-z)Y(\tau, z)dz\right\|_{L^p}d\tau\\
\leq&C\int_0^t (t-\tau)^{\alpha-1}\|Y(\tau)\|_{L^p}d\tau.
 \end{align*}
 Then applying Young's inequality, we obtain for $q>1$ and $\alpha>\frac{1}{q}$,
 \begin{align*}
 \|G\varphi(t)\|_{L^p}^q\leq C\int_0^t \|Y(z)\|_{L^p}^qd\tau.
 \end{align*}
Using Burkholder's inequality, we get
 \begin{align*}
 E\|Y(z)\|_{L^p}^q=&E\left\|\int_{0}^{\tau}\int_{\R}(\tau-s)^{-\alpha}G(\tau-s,z-y)\varphi(s,y)dydW(s)\right\|_{L^p}^q\\
 \leq&E \left(\int_{\R}\left(a\int_0^{\tau}\left|\int_{\R}(\tau-s)^{-\alpha}G(\tau-s,z-y)\varphi(s,y)e_j(y)dy\right|^2ds\right)^{\frac{p}{2}}dx\right)^{\frac{q}{p}}\\
 \leq&C\|e_j\|_{L^{\infty}}^qE\left\|\int_{0}^{\tau}(\tau-s)^{-2\alpha}\left(\int_{\R}G(\tau-s,z-y)\varphi(s,y)dy \right)^2ds\right\|_{\frac{p}{2}}^{\frac{q}{2}}\\
 \leq&CE\left(\int_{0}^{\tau}(\tau-s)^{-2\alpha}\left\|\int_{\R}G(\tau-s,z-y)\varphi(s,y)dy\right\|_{L^p}^2ds\right)^{\frac{q}{2}}\\
 \leq&C E\int_{0}^{\tau}(\tau-s)^{-2\alpha}\|\varphi(s)\|_{L^p}^qds,
 \end{align*}
  provided $\alpha<\frac{1}{2}$. That is, we need $q>2$.
Consequently, we have
 \begin{align*}
 E \sup\limits_{t\in [0,T]}\|G\varphi(t)\|_{L^p}^q \leq& C\int_0^T E\|Y(z)\|_{L^p}^qd\tau\\
 \leq& CE\int_0^T \int_{0}^{\tau}(\tau-s)^{-2\alpha}\|\varphi(s)\|_{L^p}^qdsd\tau\\
 \leq& CE \int_0^T\|\varphi(s)\|_{L^p}^q ds.
 \end{align*}

 \end{proof}
\subsection{Local existence and uniqueness}
In order to obtain a mild solution to equation \eqref{2.1}, we use a truncation technique.
Let ${B}(0,N)$ be the closed ball defined as $\{u(t,x)\in L^p(\R): \|u(t)\|_{L^p}\leq N\}$. Consider the mapping $\pi_N: L^p(\R)\to {B}(0,N)$ defined by
\begin{equation}
\pi_N(u)= \begin{cases}u, & \text { if }\|u\|_{L^p} \leq N, \\
 \frac{N}{\|u\|_{L^p}} u, & \text { if }\|u\|_{L^p}>N.\end{cases}
\end{equation}
Then, $\pi_N$ is globally Lipschitz continuous:
$$\|\pi_N(u)-\pi_N(v)\|_{L^p}\leq C \|u-v\|_{L^p}, \text{ for all $u, v\in L^p(\R)$},$$
and
$$\|\pi_N(u)\|_{L^p}\leq N, \text{ for all $u\in L^p(\R)$}. $$
Let us introduce the following truncated integral equation:
\begin{equation}\label{eN}
\begin{aligned}
u(t,x)=&\int_{\R}G(t,x-y)u_0(y)dy-k\int_0^t\int_{\R}G(t-s,x-y)\left(\pi_N u(s,y)\right)dyds\\
&+\frac{1}{2}\int_0^t\int_{\R}\frac{\partial G}{\partial y}(t-s,x-y)(|\pi_N u(s,y)|^2)dyds\\
&+\int_0^t\int_{\R}G(t-s,x-y)\sigma(\pi_N u(s,y))dydW(s).
\end{aligned}
\end{equation}
\begin{lem}\label{lo}
Suppose that $(H1)-(H2)$ hold and $u_0\in L^p(\R)$. Then for any fixed $N>0$, there exists a unique $L^p(\R)$-valued and $\mathcal{F}_t$-adapted stochastic process $u^{N}(t,x)$ satisfying equation \eqref{eN} such that $$E\left(\sup\limits_{t\in [0,T]}\|u^N(t)\|_{L^p}^p\right)\leq C(N,T).$$
\end{lem}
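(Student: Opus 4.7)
The plan is to construct $u^{N}$ as the unique fixed point of the map $\Phi$ defined by the right-hand side of \eqref{eN}, acting on the complete metric space
\[
\mathcal{H}_{T_0}:=\Bigl\{u:[0,T_0]\times\Omega\to L^p(\R)\ \mathcal{F}_t\text{-adapted, continuous}:\ \|u\|_{\mathcal{H}_{T_0}}^p:=E\sup_{t\in[0,T_0]}\|u(t)\|_{L^p}^p<\infty\Bigr\},
\]
for a sufficiently small $T_0\in(0,T]$ to be chosen, and then to patch solutions across $[0,T]$. Since $\pi_{N}$ has image in the ball of radius $N$ in $L^p(\R)$ and is globally Lipschitz there, hypotheses $(H1)$--$(H2)$ ensure that $\sigma\circ\pi_N$ is globally Lipschitz and bounded (in the sense that $\|\sigma(\pi_N u)\|_{L^p}\le l\|\pi_N u\|_{L^p}\le lN$), which regularises every nonlinearity.

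First I would verify that $\Phi$ maps $\mathcal{H}_{T_0}$ into itself. The heat-kernel term is bounded by $\|u_0\|_{L^p}$ using property (1). For the damping term, Lemma \ref{vv} applied to $v=\pi_N u$ gives control of order $T_0 N$ in $L^p$. For the Burgers term, the key observation is that $|\pi_N u|^2\in L^{p/2}(\R)$ with $\||\pi_N u|^2\|_{L^{p/2}}=\|\pi_N u\|_{L^p}^{2}\le N^2$, so Lemma \ref{ww} (with $p$ replaced by $p/2$) yields an $L^{p}$-bound proportional to $T_0^{\beta}N^2$ for some $\beta>0$ coming from integrating the kernel $(t-s)^{-1/2-1/(2p)}$. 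Finally, Lemma \ref{vw} applied to $\varphi=\sigma(\pi_N u)$ provides the stochastic-integral bound of order $T_0\cdot(lN)^p$ in the $\mathcal{H}_{T_0}$-norm.

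Next, for contractivity, the same three lemmas are applied to differences. The global Lipschitz property of $\pi_N$ handles the drift term; for the quadratic Burgers term one uses
\[
|\pi_N u|^{2}-|\pi_N v|^{2}=(\pi_N u-\pi_N v)(\pi_N u+\pi_N v),
\]
and H\"older's inequality to obtain $\||\pi_N u|^2-|\pi_N v|^2\|_{L^{p/2}}\le CN\|u-v\|_{L^p}$; and the Lipschitz property of $\sigma$ handles the noise term via Lemma \ref{vw}. Collecting everything,
\[
\|\Phi(u)-\Phi(v)\|_{\mathcal{H}_{T_0}}\le C(N)\bigl(T_0+T_0^{\beta}+T_0^{1/p}\bigr)\|u-v\|_{\mathcal{H}_{T_0}},
\]
so choosing $T_0=T_0(N)$ small enough makes $\Phi$ a strict contraction; Banach's theorem produces a unique $u^N\in\mathcal{H}_{T_0}$. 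Because $T_0$ depends only on $N$ (not on the running initial datum), one can iterate on $[T_0,2T_0],[2T_0,3T_0],\ldots$ to obtain a unique solution on the whole of $[0,T]$, and the required bound $E\sup_{t\in[0,T]}\|u^N(t)\|_{L^p}^p\le C(N,T)$ follows by summing the estimates obtained on each subinterval.

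The main obstacle I anticipate is the quadratic Burgers nonlinearity: since $u\mapsto u^2$ is not Lipschitz on $L^p(\R)$, the truncation $\pi_N$ is essential, and one must pass through the scale $L^{p/2}\to L^{p}$ furnished by Lemma \ref{ww} rather than work directly in $L^p$. The integrability near $s=t$ of the singular kernel $(t-s)^{-1/2-1/(2p)}$ (with $p$ replaced by $p/2$) is what ultimately permits the small-time contraction; without it, no such fixed-point scheme would close.
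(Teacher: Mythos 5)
Your proposal is correct in substance and uses the same basic ingredients as the paper (Banach fixed point for the truncated equation, with Lemmas \ref{vv}, \ref{ww}, \ref{vw} controlling the damping, Burgers and noise terms, and the scale $L^{p/2}\to L^p$ for the quadratic term), but the contraction mechanism is different. You shrink the time interval: you work with the norm $E\sup_{t\in[0,T_0]}\|u(t)\|_{L^p}^p$, choose $T_0=T_0(N)$ small so that the Lipschitz constant $C(N)(T_0+T_0^{\beta}+T_0^{1/p})$ drops below $1$, and then patch solutions across $[0,T]$. The paper instead works on the whole interval $[0,T]$ with the exponentially weighted norm $\|u\|_{\mathcal H}^p=\int_0^Te^{-\lambda t}E\|u(t)\|_{L^p}^p\,dt$ and makes the map a contraction by sending $\lambda\to\infty$, using $\int_0^\infty e^{-\lambda y}y^{-\theta}dy=\Gamma(1-\theta)\lambda^{\theta-1}$; the a priori bound $E\sup_t\|\mathcal Au(t)\|_{L^p}^p\le C(N,T)$, which holds uniformly over all adapted $u$ because of the truncation, then delivers the stated moment estimate for the fixed point in one stroke. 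Your route is more elementary but requires two extra verifications you only gesture at: (i) the glued process actually satisfies the mild equation on all of $[0,T]$ (this follows from the Chapman--Kolmogorov property of the heat kernel, but should be said); and (ii) your contraction estimate for the stochastic term invokes Lemma \ref{vw} with $q=p$, which at $p=2$ sits outside that lemma's hypothesis $q>2$ (the factorization method needs $1/q<\alpha<1/2$). The paper's weighted integral-in-time norm sidesteps (ii) in the contraction step, since there one only needs a pointwise-in-$t$ Burkholder bound, valid for all $p\ge2$; if you keep the $E\sup$ norm you should either restrict the contraction argument to $p>2$ or replace it by $\sup_tE\|u(t)\|_{L^p}^p$. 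Neither point is fatal, and each approach buys something: yours yields the $E\sup$ bound directly interval by interval, the paper's avoids patching altogether.
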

\begin{proof}
The proof will be done in two steps.

{\bf Step 1.} Suppose that $u=\{u(t), t\in[0,T]\}$ is an $L^p(\R)$-valued, $\mathcal{F}_t$-adapted stochastic process. Denote
$$ \mathcal{A}_1(t,x):=\int_{\R}G(t,x-y)u_0(y)dy,$$
$$\mathcal{A}_2u(t,x):=-k\int_0^t\int_{\R}G(t-s,x-y)\left(\pi_N u(s,y)\right)dyds,$$
$$\mathcal{A}_3u(t,x):=\frac{1}{2}\int_0^t\int_{\R}\frac{\partial G}{\partial y}(t-s,x-y)|\pi_N u(s,y)|^2dyds,$$
$$\mathcal{A}_4u(t,x):=\int_0^t\int_{\R}G(t-s,x-y)\sigma(\pi_N u(s,y))dydW(s),$$
and
\begin{equation}\label{A1}
\begin{aligned}
\mathcal{A}u=\mathcal{A}_1(t,x)+\mathcal{A}_2u(t,x)+\mathcal{A}_3u(t,x)+\mathcal{A}_4u(t,x).
\end{aligned}
\end{equation}
We will prove that
\begin{equation}\label{Au}
E\left(\sup\limits_{t\in [0,T]}\|\mathcal{A}u(t)\|_{L^p}^p\right)<\infty.
\end{equation}
Indeed, by Young's inequality, we have
\begin{align*}
 \|\mathcal{A}_1(t)\|_{L^p}\leq \|u_0\|_{L^p}.
\end{align*}
By Lemma \ref{vv}, we get
\begin{align*}
  \|\mathcal{A}_2u(t)\|_{L^p} \leq k \int_0^t\|\pi_Nu(s)\|_{L^p}ds\leq kN t,
\end{align*}
which implies that \eqref{Au} holds for the operator $\mathcal{A}_2$.
Then By Lemma \ref{ww}, we have
\begin{align*}
\|\mathcal{A}_3u(t)\|_{L^p}\leq C \int_0^t(t-s)^{-\frac{1}{2}-\frac{1}{2p}}\|\pi_N u(s)\|_{L^p}^2ds
\leq C(N,p,T),
\end{align*}
which implies that \eqref{Au} holds for the operator $\mathcal{A}_3$.
Finally, using $(H1)$ and Lemma \ref{vw} , we get for $p\geq 2$
\begin{align*}
E \sup\limits_{t\in [0,T]}\|\mathcal{A}_4u(t)\|_{L^p}^p\leq& CE\int_0^T\|l\pi_Nu(s)\|_{L^p}^pds\\
\leq&C(T, l, N),
\end{align*}
which implies that \eqref{Au} holds for the operator $\mathcal{A}_4$.

{\bf Step 2.}
Let $\mathcal{H}$ denote the Banach space of $L^p(\R)$-valued and $\mathcal{F}_t$-adapted stochastic process $u=\{u(t,x),t\in[0,T]\}$ such that $u(0)=u_0$, with the norm
$$\|u\|_{\mathcal{H}}^p:=\int_0^Te^{-\lambda t}E\|u(t)\|_{L^p}^pdt<\infty,$$
where $\lambda$ will be fixed later.
Define the operator $\mathcal{A}$ on $\mathcal{H}$ by \eqref{A1}. For any $u\in \mathcal{H}$, by  estimate \eqref{Au}, we have
\begin{align*}
\|\mathcal{A}u\|_{\mathcal{H}}^p=\int_{0}^T e^{-\lambda t}E\|\mathcal{A}u\|_{L^p}^pdt\leq C\int_{0}^T e^{-\lambda t}dt<\infty.
\end{align*}
Therefore, $\mathcal{A}$ is an operator that maps the Banach space $\mathcal{H}$ into itself. We will now establish that $\mathcal{A}$ is a contraction map, and then, by contraction mapping principle, we ensure the existence and uniqueness of solution to the truncated integral equation \eqref{eN}. Let $u,v\in \mathcal{H}$.
Using Lemma \ref{vv}, we can write
\begin{align*}
&\|\mathcal{A}_2(u)(t)-\mathcal{A}_2(v)(t)\|_{L^p}^p\\
\leq& k^p\left(\int_0^t \|\pi_N u(s)-\pi_N v(s)\|_{L^p}ds\right)^p\\
\leq&C(k, p, t)\int_0^t \|u(s)- v(s)\|_{L^p}^pds.
\end{align*}
Applying Lemma \ref{ww}, we obtain
\begin{align*}
&\|\mathcal{A}_3(u)(t)-\mathcal{A}_3(v)(t)\|_{L^p}^p\\
\leq&C\left(\int_0^t (t-s)^{-\frac{1}{2}-\frac{1}{2p}}\left\|\left(\pi_N u(s)\right)^2-\left(\pi_N v(s)\right)^2\right\|_{L^{\frac{p}{2}}}ds\right)^p\\
\leq&C \left(\int_0^t (t-s)^{-\frac{1}{2}-\frac{1}{2p}}\|\pi_Nu(s)+\pi_Nv(s)\|_{L^p}\|\pi_Nu(s)-\pi_Nv(s)\|_{L^p}ds\right)^p\\
\leq&C(N, p, t)\int_0^t (t-s)^{-\frac{1}{2}-\frac{1}{2p}}\|u(s)-v(s)\|_{L^p}^pds.
\end{align*}
Combining $(H2)$ with Burkholder's inequality, H\"{o}lder's inequality, Young's inequality, and the Lipschitz property of $\pi_N$, we have
\begin{align*}
&E\|\mathcal{A}_4(u^N)(t)-\mathcal{A}_4(v^N)(t)\|_{L^p}^p\\
=&  E\Big\|\int_0^t\int_{\R}G(t-s,x-y)\left(\sigma(\pi_N u^N)-\sigma(\pi_N v^N)\right)(s,y)dydW(s)\Big\|_{L^p}^p\\
\leq&E\Big\| \Big(\sum_ja_j^2\int_0^t\Big(\int_{\R}G(t-s,x-y)L|\pi_Nu^N(s,y)-\pi_Nv^N(s,y)|e_jdy\Big)^2ds\Big)^{\frac{1}{2}}\Big\|_{L^{p}}^{p} \\
\leq&E\Big\|\sum_ja_j^2\int_0^t\int_{\R}G^2(t-s,x-y)L^2|\pi_Nu^N(s,y)-\pi_Nv^N(s,y)|^2dyds\Big\|_{L^{\frac{p}{2}}}^{\frac{p}{2}}\\
\leq&C(L,a)E\left(\int_0^t\left\|G^2 \ast |\pi_Nu^N(s,y)-\pi_Nv^N(s,y)|^2 \right\|_{L^{\frac{p}{2}}} ds\right)^{\frac{p}{2}}\\
\leq& C(L,a) \int_0^t(t-s)^{-\frac{1}{2}}E\|u(s)-v(s)\|_{L^p}^pds.
\end{align*}
Combining all the above estimates, we deduce
\begin{align*}
&\|\mathcal{A}(u)(t)-\mathcal{A}(v)(t)\|_{\mathcal {H}}^p\\
&=\int_0^T e^{-\lambda t}E\|\mathcal{A}(u)(t)-\mathcal{A}(v)(t)\|_{L^p}^pdt\\
&\leq \int_0^T e^{-\lambda t}\left(\int_0^t\left(C+C(t-s)^{-\frac{1}{2}-\frac{1}{2p}}+C(t-s)^{-\frac{1}{2}} \right)E\|u(s)-v(s)\|_{L^p}^pds\right)dt\\
&\leq C\int_0^T\left(\int_0^{+\infty}e^{-\lambda y}\left(1+y^{-\frac{1}{2}-\frac{1}{2p}}+y^{-\frac{1}{2}}\right)dy\right) e^{-\lambda s}E\|u(s)-v(s)\|_{L^p}^pds\\
&\leq C\left(\int_0^{+\infty}e^{-\lambda y}\left(1+y^{-\frac{1}{2}-\frac{1}{2p}}+y^{-\frac{1}{2}}\right)dy\right)\|u(s)-v(s)\|_{\mathcal{H}}^p\\
&\leq C\left( \frac{1}{\lambda}+\frac{\Gamma\left(\frac{1}{2}-\frac{1}{2p}\right)}{\lambda^{\frac{1}{2}-\frac{1}{2p}}}+\frac{\Gamma(\frac{1}{2})}{\lambda^{\frac{1}{2}}}\right)
\|u(s)-v(s)\|_{\mathcal{H}}^p,
\end{align*}
where $C=C(k,N,L,a,T)$ and $\Gamma(.)$ represents the gamma function.
Combining the properties of the Gamma function, we can take $\lambda$ large enough such that
$$C\left( \frac{1}{\lambda}+\frac{\Gamma\left(\frac{1}{2}-\frac{1}{2p}\right)}{\lambda^{\frac{1}{2}-\frac{1}{2p}}}+\frac{\Gamma(\frac{1}{2})}{\lambda^{\frac{1}{2}}}\right)<1.$$
For this $\lambda$, we obtain that the operator $\mathcal{A}$ is a contraction mapping on $\mathcal{H}$. Therefore, there exists a unique fixed point for $\mathcal{A}$, which implies the existence of a unique solution to equation \eqref{eN}.
\end{proof}

\subsection{Global existence.}
 Let us now prove the global existence and uniqueness of mild solution
to the equation \eqref{2.1}. Lemma \ref{lo} implies the uniqueness of a mild solution and a local
existence of a mild solution to the system \eqref{2.1} up to a stopping time
$$
\tau_N:=\inf\{t\geq 0: \|u(t)\|_{L^p}\geq N\}\wedge T.
$$
The global existence will be proved using the energy estimate  obtained in the following Lemma.
\begin{lem}\label{energy}
Suppose that $u_0\in L^p(\R)$ and $(H1)-(H2)$ hold. Then there exists a positive constant $C$ independent of $N$ such that
$$E\left(\sup\limits_{t\in[0,\tau_N]}\|u(t)\|_{L^p}^p+2p(p-1)\int_0^{\tau_N}\int_{\R}|u(s)|^{p-2}|u_x(s)|^2dxds\right)\leq C(1+\|u_0\|_{L^p}^p).$$
\end{lem}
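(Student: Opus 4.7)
My plan is to apply It\^o's formula to $F(u(t))=\|u(t)\|_{L^p}^p=\int_\R |u(t,x)|^p\,dx$ on the stopping interval $[0,\tau_N]$, and then close the resulting energy identity using the Burkholder--Davis--Gundy inequality together with a Gronwall argument. Since $F$ is not twice differentiable in the Hilbert-space sense and $u$ is only a mild solution, a preliminary step is to make It\^o's formula rigorous; a natural route is to regularize $F$ by $F_\varepsilon(u)=\int_\R(u^2+\varepsilon)^{p/2}dx$ and simultaneously smooth $u$ by a mollifier or Galerkin truncation so that the approximation is a strong solution, then let the regularization parameters tend to zero using the local $L^p$ bound from Lemma~\ref{lo}.

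Once It\^o's formula is available, I would identify the four contributions. Integration by parts on the Laplacian produces the dissipative term $-p(p-1)\int_\R |u|^{p-2}u_x^2\,dx$; boundary terms at infinity vanish because $u\in L^p(\R)$. The damping contributes $-pk\|u\|_{L^p}^p$. The nonlinear convection vanishes identically, since
\begin{equation*}
p\int_\R |u|^{p-2}u\cdot u u_x\,dx=p\int_\R |u|^p u_x\,dx=\frac{p}{p+1}\int_\R \partial_x\bigl(u|u|^p\bigr)\,dx=0.
\end{equation*}
The It\^o correction equals $\frac{p(p-1)}{2}\sum_j a_j^2\int_\R |u|^{p-2}\sigma(u)^2 e_j^2\,dx$, which by $(H1)$ and $\sup_j\|e_j\|_{L^\infty}\le 1$ is at most $\frac{p(p-1)}{2}l^2 a\|u\|_{L^p}^p$. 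Collecting these, for every $t\le\tau_N$,
\begin{equation*}
\|u(t)\|_{L^p}^p+c_p\int_0^t\int_\R |u|^{p-2}u_x^2\,dx\,ds\;\le\;\|u_0\|_{L^p}^p+C_1\int_0^t\|u(s)\|_{L^p}^p\,ds+M_t,
\end{equation*}
with $c_p>0$, $C_1=C_1(p,k,l,a)$, and $M_t$ the martingale arising from the stochastic integral.

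The main obstacle will be to bring the supremum in time inside the expectation while absorbing the martingale. The quadratic variation of $M$ admits, via H\"older and $(H1)$, the pointwise bound $\sum_j a_j^2\bigl(\int_\R |u|^{p-2}u\sigma(u)e_j\,dx\bigr)^2\le a\,l^2\|u\|_{L^p}^{2p}$, so by Burkholder--Davis--Gundy,
\begin{equation*}
E\sup_{t\le\tau_N}|M_t|\;\le\;C_2\,E\biggl(\sup_{s\le\tau_N}\|u(s)\|_{L^p}^{p}\int_0^{\tau_N}\|u(s)\|_{L^p}^p\,ds\biggr)^{1/2},
\end{equation*}
and a Young inequality $\sqrt{AB}\le \tfrac{1}{4C_2}A+C_2 B$ lets me absorb $\tfrac14 E\sup_{t\le\tau_N}\|u(t)\|_{L^p}^p$ into the left-hand side. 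The remaining inequality for $\phi(t):=E\sup_{r\le t\wedge\tau_N}\|u(r)\|_{L^p}^p$ is a scalar Gronwall-type bound, which yields the stated estimate with a constant $C=C(p,k,l,a,T)$ that is manifestly independent of $N$.
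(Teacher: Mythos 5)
Your proposal is correct and follows essentially the same route as the paper: Itô's formula for $\|u\|_{L^p}^p$ (justified formally by an approximation), integration by parts for the dissipative and convection terms, the bound $\frac{p(p-1)}{2}al^2\|u\|_{L^p}^p$ for the Itô correction, Burkholder--Davis--Gundy plus Young's inequality to absorb half of $E\sup_t\|u(t)\|_{L^p}^p$, and Gronwall. The only difference is that you spell out the regularization needed to make Itô's formula rigorous, which the paper dispatches with a one-line remark citing Krylov.
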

\begin{proof}
 The following calculations are formal and can be justified by a limiting procedure. Applying It\^{o}'s formula \cite{Kr}, we have
\begin{align*}
\|u(t)\|_{L^p}^p=&\|u_0\|_{L^p}^p+p\int_0^t\int_{\R}|u(s)|^{p-2}u(s)u_{xx}(s)dxds\\
&-kp\int_0^t\int_{\R}|u(s)|^{p-2}|u(s)|^2dxds-p\int_0^t\int_{\R}|u(s)|^{p-2}u(s)(u(s)u_x(s))dxds\\
&+p\sum_{j}\int_0^t\int_{\R}|u(s)|^{p-2}u(s)\sigma(u)a_je_j(x) dxd\beta_j(s)\\
&+\frac{1}{2}p(p-1)\sum_{j}\int_0^t\int_{\R}|u(s)|^{p-2}\sigma^2(u)a_j^2e_j^2(x)dxds\\
=:&\|u_0\|_{L^p}^p+I_1+I_2+I_3+I_4+I_5.
\end{align*}
For $I_1$, we have
\begin{align*}
p\int_0^t\int_{\R}|u(s)|^{p-2}u(s) u_{xx}(s)dxds=-p(p-1)\int_0^t\int_{\R}|u(s)|^{p-2}u_x^2(s)dxds.
\end{align*}
By the integration by parts formula, we obtain
\begin{align*}
I_3&=-p\int_0^t\int_{\R}|u(s)|^{p-2}u(s)(u(s)u_x(s))dxds\\
&=p^2\int_0^t\int_{\R}|u(s)|^{p}u_x(s)dxds\\
&=-pI_3.
\end{align*}
Then we have $I_3=0$.

Next, we consider the term $I_4$. By $(H1)$, H\"{o}lder's inequality and Young's inequality, we have
\begin{align*}
E\sup\limits_{t\in[0,\tau_N]}I_4
= &pE\sup\limits_{t\in[0,\tau_N]}\sum_{j}\int_0^t\int_{\R}|u(s)|^{p-2}u(s) \sigma(u)a_je_j(x)dxd\beta_j(s)\\
\leq&pE \left(\sum_{j}\int_0^{\tau_N}\left(\int_{\R}|u(s)|^{p-1}|e_j(x)| a_j |\sigma(u)| dx\right)^2ds\right)^{\frac{1}{2}}\\
\leq&C(l)E\left(\sum_{j}\int_0^{\tau_N}\left(\int_{\R}|u(s)|^p|e_j(x)| a_j dx\right)^2ds\right)^{\frac{1}{2}}\\
\leq&  C(l)E\left(\sum_{j}\int_0^{\tau_N}\left( \int_{\R}|u(s)|^pdx\right)\left(\int_{\R}|u(s)|^p e^2_j(x)a^2_j dx\right)ds\right)^{\frac{1}{2}}\\
\leq& C(l,a) E\left(\int_0^{\tau_N}\|u(s)\|_{L^p}^{2p}ds\right)^{\frac{1}{2}}\\
\leq& C(l,a)E\left(\sup\limits_{s\in[0,{\tau_N}]}\|u(s)\|_{L^p}^{\frac{p}{2}} \left(\int_0^{\tau_N}\|u(s)\|_{L^p}^{p}ds\right)^{\frac{1}{2}}\right)\\
\leq& \frac{1}{2}E\sup\limits_{s\in[0,{\tau_N}]}\|u(s)\|_{L^p}^{p}+C(l,a)\int_0^{\tau_N} E\|u(s)\|_{L^p}^{p}ds.
\end{align*}
 For the final term, we use $(H1)$ to find
\begin{align*}
I_5=&\frac{1}{2}p(p-1)\sum_{j}\int_0^t\int_{\R}|u(t)|^{p-2}\sigma^2(u)a_j^2e_j^2(x)dxds\\
\leq  &\frac{1}{2}p(p-1)l^2\sum_{j}a_j^2\|e_j\|_{L^{\infty}}^2\int_0^t\int_{\R}|u(s)|^p dxds\\
\leq &C(l,a,p)\int_0^t\|u(s)\|_{L^p}^pds.
\end{align*}
Based on the above estimates, we obtain that
\begin{align*}
&E\left(\sup\limits_{t\in[0,{\tau_N}]}\|u(t)\|_{L^p}^p+2p(p-1)\int_0^{\tau_N}\int_{\R}|u(s)|^{p-2}u_x^2(s)dxds\right)\\
\leq&C(l,a,k,p)\int_0^{\tau_N} E\|u(s)\|_{L^p}^{p}ds+ 2\|u_0\|_{L^p}^p.
\end{align*}
By Gronwall's inequality, we have
\begin{align*}
&E\left(\sup\limits_{t\in[0,{\tau_N}]}\|u(t)\|_{L^p}^p+2p(p-1)\int_0^{\tau_N}\int_{\R}|u(s)|^{p-2}|u_x(s)|^2dxds\right)\\
\leq &C(l,a,k,p,T)(1+\|u_0\|_{L^p}^p).
\end{align*}
The proof is complete.
\end{proof}

{\bf Proof of Theorem \ref{solution}.}

First, we examine the uniqueness of the solution. Suppose that $u$ and $v$ are two solutions to equation \eqref{2.1}. For every natural  number $N$, we define the stopping time
$$
\sigma_N=\inf\{t\geq 0: \inf(\|u(t)\|_{L^p},\|v(t)\|_{L^p})\geq N\}\wedge T.
$$
Let $u_N(t)=u(t\wedge\sigma_N)$ and $v_N(t)=v(t\wedge\sigma_N)$ for all $t\in[0,T]$. Then $u_N(t)$ and $v_N(t)$ are solutions of equation \eqref{eN}. Therefore, $u_N(t)=v_N(t)$ a.s. for all $t\in [0,T]$. Taking the limit as $N\to \infty$, we obtain that $u(t)=v(t)$ a.s. for all $t\in [0,T]$.

In order to establish the existence of the solution to equation \eqref{2.1}, let $u^N$ be the solution to \eqref{eN} for any $N>0$. Consider the stopping time
$$
\tau_N=\inf\{t\geq 0: \|u(t)\|_{L^p}\geq N\}\wedge T
$$
as in subsection 3.2.
Notice that $u^M(t)=u^N(t)$ for $M\geq N$ and $t\leq \tau_N$. Therefore we can set $u(t)=u^N(t)$ if $t\leq \tau_N$ and we have constructed a mild solution to equation \eqref{2.1} on the interval $[0,\tau_{\infty})$, where $\tau_{\infty}=\sup\limits_{N}\tau_N$. It remains to show that
$$
P(\tau_{\infty}=T)=1.
$$
Taking into account Lemma \ref{energy}, we have
\begin{align*}
\sup\limits_{N}E(\sup\limits_{t\in[0,\tau_N]}\|u^N(t)\|_{L^p}^p)<\infty.
\end{align*}
Since
\begin{equation}\label{tau}
\begin{aligned}
P(\tau_{N}<T)\leq &P\left(\sup\limits_{t\in[0,\tau_N]} \|u^N(t)\|_{L^p}^p\geq N^p\right)\\
\leq &\frac{E\left(\sup\limits_{t\in[0,\tau_N]}\|u^N(t)\|_{L^p}^p\right)}{ N^p}\\
\leq&\frac{C(1+\|u_0\|^p_{L^p})}{N^p},
\end{aligned}
\end{equation}
where $C$ is independent of $N$, we obtain the result by letting $N\to \infty$.

\section{Existence of invariant measures}
In this section, we investigate the invariant measures for equation \eqref{2.1} on Hilbert space $L^2(\R)$.
\subsection{Uniform boundedness in time.}
\begin{lem}\label{bound}
Assume that $u_0\in L^2(\R)$, $(H1)-(H2)$ hold, $k>0$. Then for $p\geq 2$ and $al^2<\frac{k}{(p-1)}$, we have for all $t\geq 0$,
\begin{equation}\label{4.11}
E\|u(t)\|_{L^2}^{p}\leq \|u_0\|_{L^2}^p.
\end{equation}
Specially, for $p=2$ and $al^2<k$, we have
\begin{equation}\label{4.12}
E\|u(t)\|_{L^2}^2+2\int_0^t e^{(2k-al^2)(s-t)}E\|u_x(s)\|_{L^2}^2ds\leq \|u_0\|_{L^2}^2,
\end{equation} and
\begin{equation}\label{4.13}
\int_0^t E\|u_x(s)\|_{L^2}^2ds\leq \|u_0\|_{L^2}^2.
\end{equation}
\begin{proof}
For $p\geq 2$, by It\^{o}'s formula, we have
\begin{align*}
\frac{d}{dt}E\|u(t)\|_{L^2}^p=&pE\|u(t)\|_{L^2}^{p-2}\int_{\R}u(t) u_{xx}(t)dx-pkE\|u(t)\|_{L^2}^{p-2}\int_{\R}u^2(t)dx\\
&-pE\|u(t)\|_{L^2}^{p-2}\int_{\R}u(t)u(t)u_x(t)dx\\
&+\frac{p(p-1)}{2}E\|u(t)\|_{L^2}^{p-2}\sum_{j}\int_{\R}\sigma^2(u)a_j^2e_j^2(x)dx.
\end{align*}
By $$\int_{\R}u^2(t)u_x(t)dx=0,$$ we know
$$-pE\|u(t)\|_{L^2}^{p-2}\int_{\R}u(t)u(t)u_x(t)dx=0.$$
For the last term on the right-hand side of the above equation, by $(H1)$ and Young's inequality, we obtain
\begin{align*}
&\frac{p(p-1)}{2}E\|u(t)\|_{L^2}^{p-2}\sum_{j}\int_{\R}\sigma^2(u)a_j^2e_j^2(x)dx\\
\leq&\frac{p(p-1)}{2}a\|e_j\|_{L^{\infty}}^2E\|u(t)\|_{L^2}^{p-2} \int_{\R}\sigma^2(u) dx \\
\leq&\frac{p(p-1)}{2}aE\|u(t)\|_{L^2}^{p-2} \int_{\R} l^2|u(t)|^2dx\\
\leq &\frac{p(p-1)}{2}al^2E\|u(t)\|_{L^2}^p.
\end{align*}
Then we have
\begin{equation}\label{up}
\begin{aligned}
\frac{d}{dt}E\|u(t)\|_{L^2}^p\leq&-pE\|u(t)\|_{L^2}^{p-2}\|u_x(t)\|_{L^2}^2-pkE\|u(t)\|_{L^2}^p\\
&+\frac{p(p-1)}{2}al^2E\|u(t)\|_{L^2}^p\\
\leq&\left(-pk+\frac{p(p-1)}{2}al^2\right) E\|u(t)\|_{L^2}^p.
\end{aligned}
\end{equation}
Since $al^2<\frac{k}{(p-1)}$, it follows that $-pk+\frac{p(p-1)}{2}al^2<0$. Therefore, we get
\begin{align*}
E\|u(t)\|_{L^2}^p&\leq \|u_0\|_{L^2}^p.
\end{align*}

For $p=2$, by \eqref{up}, we have
\begin{equation}\label{ux}
\begin{aligned}
\frac{d}{dt}E\|u(t)\|_{L^2}^2+2E\|u_x(t)\|_{L^2}^2+(2k-al^2)E\|u(t)\|_{L^2}^2
\leq 0.
\end{aligned}
\end{equation}
Since $al^2<k$,  it follows that $2k-al^2>0$. Multiplying \eqref{ux} by $e^{(2k-al^2)t}$, we obtain
$$ \frac{d}{dt} \left(e^{(2k-al^2)t}E\|u(t)\|_{L^2}^2\right)+2e^{(2k-al^2)t}E\|u_x(t)\|_{L^2}^2\leq 0.$$
Integrating both sides of this inequality on $(0, t)$, we get
$$e^{(2k-al^2)t}E\|u(t)\|_{L^2}^2+2\int_0^t e^{(2k-al^2)s}E\|u_x(s)\|_{L^2}^2ds\leq \|u_0\|_{L^2}^2. $$
Thus, we have
$$E\|u(t)\|_{L^2}^2+2\int_0^t e^{(2k-al^2)(s-t)}E\|u_x(s)\|_{L^2}^2ds\leq \|u_0\|_{L^2}^2. $$
On the other hand, by integrating \eqref{ux} on $(0,t)$, we have
$$\int_0^t E\|u_x(s)\|_{L^2}^2ds\leq \|u_0\|_{L^2}^2. $$
The proof is complete.
\end{proof}
\end{lem}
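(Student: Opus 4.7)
The plan is to obtain a differential inequality for $E\|u(t)\|_{L^2}^p$ by applying It\^{o}'s formula, exploiting the cancellation of the Burgers nonlinearity in $L^2$, and then invoking Gronwall. Throughout, I would treat the computations as formal (applied to the mild solution) and justified by a standard regularization/limit argument, as in the proof of Lemma \ref{energy}.

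First, I would apply It\^{o}'s formula to $\|u(t)\|_{L^2}^p$, viewed as the $(p/2)$-th power of the squared $L^2$-norm. Taking expectation kills the stochastic integral against $d\beta_j(s)$, so five groups of terms remain: (i) the Laplacian contribution, which after integration by parts gives $-p\,E\|u\|_{L^2}^{p-2}\|u_x\|_{L^2}^2 \le 0$; (ii) the damping term $-pk\,E\|u\|_{L^2}^p$; (iii) the Burgers nonlinearity, which produces $-p\,E\|u\|_{L^2}^{p-2}\int_\R u^2 u_x\,dx$; (iv) the vanishing martingale contribution; and (v) the It\^{o} correction $\tfrac{1}{2}p(p-1)\,E\|u\|_{L^2}^{p-2}\sum_j a_j^2\int_\R \sigma^2(u)e_j^2\,dx$.

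The crucial structural observation is that $\int_\R u^2 u_x\,dx = \tfrac{1}{3}\int_\R (u^3)_x\,dx = 0$ for sufficiently decaying $u$, so the nonlinear term in (iii) contributes nothing. For the It\^{o} correction in (v), I would use $\sup_j\|e_j\|_{L^\infty}\le 1$, hypothesis (H1), and $\sum_j a_j^2 = a$ to bound it by $\tfrac{1}{2}p(p-1)al^2\,E\|u\|_{L^2}^p$. Combining these estimates (and dropping the non-positive gradient term) yields
\begin{equation*}
\frac{d}{dt} E\|u(t)\|_{L^2}^p \;\le\; \Bigl(-pk + \tfrac{p(p-1)}{2}al^2\Bigr)E\|u(t)\|_{L^2}^p.
\end{equation*}
Under the hypothesis $al^2 < k/(p-1)$, the coefficient on the right is strictly negative, so Gronwall's inequality immediately delivers \eqref{4.11}.

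For the refined $p=2$ statements, I would not discard the gradient term, keeping instead
\begin{equation*}
\frac{d}{dt} E\|u(t)\|_{L^2}^2 + 2E\|u_x(t)\|_{L^2}^2 + (2k-al^2)E\|u(t)\|_{L^2}^2 \;\le\; 0.
\end{equation*}
Since $al^2<k$ makes $2k-al^2>0$, multiplying by the integrating factor $e^{(2k-al^2)t}$ and integrating on $(0,t)$ gives \eqref{4.12}. Estimate \eqref{4.13} follows by integrating the differential inequality directly on $(0,t)$ and dropping the non-negative term $(2k-al^2)E\|u\|_{L^2}^2$.

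The main technical obstacle is the rigorous justification of It\^{o}'s formula: the mild solution need not possess the regularity required for the pointwise identities used above, and in particular the integration by parts that makes the Burgers term vanish must be performed in a setting where $u(t,\cdot)\in H^1(\R)$ with adequate decay at infinity. I would address this by approximating $u$ through a Galerkin or mollification scheme (or invoking the energy estimate of Lemma \ref{energy} together with a density argument) and passing to the limit; everything else is then a routine consequence of the dissipation structure.
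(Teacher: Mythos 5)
Your proposal is correct and follows essentially the same route as the paper: Itô's formula for $E\|u(t)\|_{L^2}^p$, cancellation of the Burgers term via $\int_\R u^2u_x\,dx=0$, the bound $\tfrac{p(p-1)}{2}al^2E\|u\|_{L^2}^p$ on the Itô correction using $(H1)$ and $\sup_j\|e_j\|_{L^\infty}\le 1$, and then Gronwall for \eqref{4.11} and the integrating factor $e^{(2k-al^2)t}$ for the $p=2$ refinements. The remark on justifying the formal computation by approximation matches the paper's own (brief) treatment.
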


\subsection{Uniform tail-estimates}
We now use a cut-off method to establish uniform estimates on the tails of the solution to equation \eqref{2.1}, which will play a key role for obtaining the tightness of a family of distribution laws of the solution. More precisely, we will first establish the uniform smallness of the solution outside a sufficiently large ball in $\R$, and then combine these estimates and the compactness of embeddings in bounded domains to derive the tightness of probability distributions of the solution in $L^2(\R)$.
\begin{lem}\label{tail}
Assume that $(H1)-(H2)$ hold, $u_0\in L^2(\R)$, $k>0$ and $al^2<\frac{3}{7}k$. Then for every $\varepsilon>0$, there exists $N=N(\varepsilon, u_0)$ such that for all $t\geq 0$,
\begin{align*}
E\int_{|x|\geq N}|u(t,x)|^2dx<\varepsilon.
\end{align*}
\end{lem}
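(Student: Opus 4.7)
The plan is to introduce a smooth cutoff $\psi:\R\to[0,1]$ with $\psi\equiv 0$ on $|x|\leq 1/2$, $\psi\equiv 1$ on $|x|\geq 1$, and bounded derivatives; then set $\psi_N(x)=\psi(x/N)$, so that $\|\psi_N'\|_\infty+\|\psi_N''\|_\infty\leq C/N$. Since $E\int_{|x|\geq N}u^2\,dx\leq y_N(t):=E\int_\R\psi_N u^2\,dx$, it suffices to prove that $\sup_{t\geq 0}y_N(t)\to 0$ as $N\to\infty$.

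I would apply It\^o's formula to $\int_\R\psi_N u^2\,dx$, justified by the same limiting procedure used in Lemma \ref{energy}. After integration by parts in the Laplacian and Burgers terms, the deterministic drift becomes
\begin{align*}
\frac{d}{dt}y_N &= -2E\int_\R\psi_N u_x^2\,dx - 2E\int_\R\psi_N' u u_x\,dx - 2k y_N\\
&\quad + \tfrac{2}{3}E\int_\R\psi_N' u^3\,dx + \sum_j a_j^2 E\int_\R\psi_N \sigma^2(u) e_j^2\,dx,
\end{align*}
where the cubic commutator comes from $\int\psi_N u^2 u_x\,dx = -\tfrac{1}{3}\int\psi_N' u^3\,dx$. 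Using $(H1)$ and $\sup_j\|e_j\|_\infty\leq 1$, the noise contribution is at most $al^2 y_N$, which combined with $-2ky_N$ produces the favourable coefficient $-(2k-al^2)$.

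The heart of the argument is making the two $\psi_N'$-commutators small uniformly in $t$. For the quadratic one, Cauchy--Schwarz and Young give $\bigl|E\int\psi_N' u u_x\,dx\bigr|\leq \tfrac{C}{N}\bigl(E\|u\|_{L^2}^2+E\|u_x\|_{L^2}^2\bigr)$. The cubic commutator is the main obstacle: I would invoke the one-dimensional Gagliardo--Nirenberg inequality $\|f\|_{L^3}^3\leq C\|f\|_{L^2}^{5/2}\|f_x\|_{L^2}^{1/2}$ followed by Young's inequality at exponents $(4/3,4)$ to obtain $\bigl|E\int\psi_N' u^3\,dx\bigr|\leq \tfrac{C}{N}\bigl(E\|u\|_{L^2}^{10/3}+E\|u_x\|_{L^2}^2\bigr)$. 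Here the exponent $p=10/3$ emerges naturally, and the hypothesis $al^2<\tfrac{3k}{7}=k/(p-1)$ is precisely what is needed to apply Lemma \ref{bound} at $p=10/3$, yielding $\sup_{t\geq 0}E\|u(t)\|_{L^2}^{10/3}\leq \|u_0\|_{L^2}^{10/3}$; this matching of constants is a strong indication that this is the intended route.

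Collecting all the estimates produces a differential inequality of the form
\begin{equation*}
\frac{d}{dt}y_N(t)+(2k-al^2)\, y_N(t)\leq \frac{C(\|u_0\|_{L^2})}{N}\bigl(1+E\|u_x(t)\|_{L^2}^2\bigr).
\end{equation*}
Gronwall's inequality, combined with the uniform-in-$t$ bound $\int_0^t E\|u_x(s)\|_{L^2}^2\,ds\leq \|u_0\|_{L^2}^2$ from \eqref{4.13}, yields $y_N(t)\leq y_N(0)+C(\|u_0\|_{L^2})/N$ for all $t\geq 0$. Since $y_N(0)=\int\psi_N u_0^2\,dx\to 0$ as $N\to\infty$ by dominated convergence, choosing $N$ sufficiently large makes both terms smaller than $\varepsilon/2$, completing the proof.
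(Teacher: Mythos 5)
Your proposal is correct and follows essentially the same route as the paper: a smooth cutoff supported outside a large ball, Itô's formula for the localized $L^2$-norm, the Agmon/Gagliardo--Nirenberg interpolation $\|u\|_{L^3}^3\leq \|u\|_{L^2}^{5/2}\|u_x\|_{L^2}^{1/2}$ with Young's inequality to reduce the cubic commutator to $E\|u\|_{L^2}^{10/3}$ and $E\|u_x\|_{L^2}^2$, the uniform bound of Lemma \ref{bound} at $p=10/3$ (which is exactly where $al^2<\tfrac{3}{7}k$ enters), and an exponentially weighted Gronwall argument using the integrated gradient bound. The only differences are cosmetic (cutoff $\psi_N$ versus $\theta_m^2$, and invoking \eqref{4.13} rather than \eqref{4.12} in the final step).
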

\begin{proof}
Let $\theta: \R\to [0,1]$ be a smooth function such that
$$
\theta(x)= \begin{cases}0, & \text { for }|x| \leqslant \frac{1}{2}; \\ 1, & \text { for }|x| \geqslant 1.\end{cases}
$$
Then there exists a positive constant $\hat{C}$ such that $|\theta'(x)| \leq \hat{C}$.

Given $m\in \mathbb{N}$, let $\theta_m(x)=\theta(\frac{x}{m})$. By \eqref{2.1} we get
$$d (\theta_mu(t,x))=\left(\theta_m\dfrac{\partial^2 u(t,x)}{\partial x^2}-k\theta_mu(t,x)
-\frac{1}{2}\theta_m\dfrac{\partial u^2(t,x)}{\partial x}\right)dt+\theta_m\sigma(u(t,x))dW(t).$$
Applying It\^{o}'s formula, we have
\begin{align*}
\|\theta_mu(t)\|_{L^2}^2=&\|\theta_m u_0\|_{L^2}^2+2\int_0^t\int_{\R}\theta_mu(t)(u_{xx}(t)\theta_m)dxds\\
&-2k\int_0^t\int_{\R}\theta_mu(t)(u(t)\theta_m)dxds-2\int_0^t\int_{\R}\theta_mu(t) (u(t) u_x(t) \theta_m)dxds\\
&+2\int_0^t\int_{\R}\theta_mu(t) (\theta_m\sigma(u(t)))dxdW(s)\\
&+\sum_{j}\int_0^t\int_{\R} |\theta_m\sigma(u(t)))a_je_j(x)|^2dxds.
\end{align*}
By taking the derivative of $t$ and taking the expectation on both sides of the above equation, we can obtain
\begin{align*}
\frac{d}{dt}E\|\theta_mu(t)\|_{L^2}^2=&2E\int_{\R} \theta_mu(t)(u_{xx}(t)\theta_m)dx
- 2k E\int_{\R} \theta_mu(t)(u(t)\theta_m)dx\\
&-2E\int_{\R} \theta_mu(t) (u(t) u_x(t) \theta_m)dx+ E\sum_{j}\int_{\R}  |\theta_m\sigma(u(t)))a_je_j|^2dx\\
=:&J_1+J_2+J_3+J_4.
\end{align*}
For $J_1$, by Young's inequality, we have
\begin{align*}
J_1=&2E\int_{\R} \theta_mu(t)(u_{xx}(t)\theta_m)dx\\
=&-2E\int_{\R}u_x(t)\left(\frac{2}{m}\theta_m\theta'\left(\frac{x}{m}\right)u(t)+\theta_m^2u_x(t)\right)dx\\
=&-\frac{4}{m}E\int_{\R}\theta_m\theta'\left(\frac{x}{m}\right)u(t)u_x(t)dx-2E\int_{\R}\theta_m^2u_x^2(t)dx\\
\leq& \frac{4\hat{C}}{m}E\int_{\R}\theta_m|u(t)||u_x(t)|dx-2E\int_{\R}\theta_m^2u_x^2(t)dx\\
\leq& \frac{1}{2}E\int_{\R}|\theta_m u_x(t) |^2dx+\frac{8\hat{C}^2}{m^2}E\int_{\R}|u(t)|^2dx-2E\int_{\R}\theta_m^2u_x^2(t)dx\\
\leq&-\frac{3}{2}E\int_{\R}|\theta_m u_x(t) |^2dx+ \frac{8\hat{C}^2}{m^2}E\int_{\R}|u(t)|^2dx.
\end{align*}
For $J_3$, we get
\begin{align*}
J_3&=-2E\int_{\R} \theta_m^2 u^2(t)  u_x(t) dx\\
&=\frac{4}{m}E\int_{\R}\theta_m\theta'\left(\frac{x}{m}\right) u^3(t)  dx-2J_3.
\end{align*}
 Rearranging the terms in the above equation, we obtain
 \begin{align*}
J_3=\frac{4}{3m}E\int_{\R}\theta_m(x)\theta'\left(\frac{x}{m}\right) u^3(t)  dx\leq \frac{4\hat{C}}{3m}E\int_{\R}|u(t)|^3  dx.
\end{align*}
For $J_4$, from $(H1)$, it follows that
 \begin{align*}
J_4\leq &\sum_{j}a_j^2\| e_j\|_{L^{\infty}}^2E\int_{\R}|\theta_m (l |u(t)|)|^2 dx
\leq al^2E\int_{\R}|\theta_m u(t)|^2dx.
\end{align*}
Combining the above estimates, we get
\begin{align*}
\frac{d}{dt}E\|\theta_m u(t)\|_{L^2}^2\leq& -\frac{3}{2}E\int_{\R}|\theta_m u_x(t) |^2dx+ \frac{8\hat{C}^2}{m^2}E\int_{\R}|u(t)|^2dx-2k E\int_{\R} |\theta_mu(t)|^2 dx\\
&+\frac{4\hat{C}}{3m}E\int_{\R}|u(t)|^3  dx+ al^2E\int_{\R}|\theta_m u(t)|^2dx\\
\leq &(-2k+al^2)E\|\theta_m u(t)\|_{L^2}^2+\frac{8\hat{C}^2}{m^2}E\|u(t)\|_{L^2}^2+\frac{4\hat{C}}{3m}E\|u(t)\|_{L^3}^3.
\end{align*}
By applying Agmon's inequality, which states that for $u\in H^1(\R)$, there exists a positive constant $C$ such that
$$
\|u\|_{L^{\infty}(\mathbb{R})} \leq C\|u\|_{L^2(\mathbb{R})}^{1 / 2}\left\|u_x\right\|_{L^2(\mathbb{R})}^{1 / 2},
$$
we obtain the following inequality:
\begin{align*}
\|u(t)\|_{L^3}^3&\leq \|u(t)\|_{L^{\infty}}\|u(t)\|_{L^2}^2\leq \|u(t)\|_{L^2}^{\frac{5}{2}}\|u_x(t)\|_{L^2}^{\frac{1}{2}}\leq \frac{3}{4}\|u(t)\|_{L^2}^{\frac{10}{3}}+\frac{1}{4}\|u_x(t)\|_{L^2}^2.
\end{align*}
Then,
\begin{align*}
&\frac{d}{dt}E\|\theta_m u(t)\|_{L^2}^2+(2k-al^2)E\|\theta_m u(t)\|_{L^2}^2\\
\leq&\frac{8\hat{C}^2}{m^2}E\|u(t)\|_{L^2}^2+\frac{\hat{C}}{m}\|u(t)\|_{L^2}^{\frac{10}{3}}+\frac{\hat{C}}{3m}\|u_x(t)\|_{L^2}^2.
\end{align*}

By inequality \eqref{4.11}, we obtain that there exists a positive constant $C_1= C_1(u_0) $ such that for all $t \geq 0$,
\begin{align*}
\frac{8\hat{C}^2}{m^2}E\|u(t)\|_{L^2}^2\leq \frac{8\hat{C}^2}{m^2} C_1.
\end{align*}
Therefore, for every $\varepsilon>0$, there exists $L_1=L_1(\varepsilon, u_0)$ such that for all $m\geq L_1$,
\begin{align*}
\frac{8\hat{C}^2}{m^2}E\|u(t)\|_{L^2}^2<\varepsilon.
\end{align*}
Similarly, by inequality \eqref{4.11}, there exists positive constant $C_2=C_2(u_0)$ such that for all  $t\geq 0$,
$$\frac{\hat{C}}{m}E\|u(t)\|_{L^2}^{\frac{10}{3}}\leq \frac{\hat{C}}{m}C_2.$$
Therefore, there exists $L_2=L_2(\varepsilon, u_0)\geq L_1$ such that for all $m\geq L_2$,
$$\frac{\hat{C}}{m}E\|u(t)\|_{L^2}^{\frac{10}{3}}<\varepsilon.$$
Then we have
\begin{align*}
\frac{d}{dt}E\|\theta_m u(t)\|_{L^2}^2+(2k-al^2)E\|\theta_m u(t)\|_{L^2}^2<2\varepsilon+\frac{\hat{C}}{3m}\|u_x(t)\|_{L^2}^2.
\end{align*}
Multiplying both sides of the above inequality by $e^{(2k-al^2)t}$, and integrating on $(0,t)$, we get
$$E\|\theta_m u(t)\|_{L^2}^2\leq e^{-(2k-al^2)t}\|\theta_m u_0\|_{L^2}^2+\frac{\hat{C}}{3m}\int_{0}^t e^{(2k-al^2)(s-t)}\|u_x(s)\|_{L^2}^2ds+\frac{2\varepsilon}{2k-al^2}.$$
By inequality \eqref{4.12}, there exists positive constant $C_3=C_3(u_0)$ such that
$$\frac{\hat{C}}{3m}\int_{0}^t e^{(2k-al^2)(s-t)}\|u_x(s)\|_{L^2}^2ds\leq \frac{\hat{C}}{3m}C_3.$$
Therefore, there exists $L_3=L_3(\varepsilon, u_0)\geq L_2$ such that for all $m\geq L_3$,
$$\frac{\hat{C}}{3m}\int_{0}^t e^{(2k-al^2)(s-t)}\|u_x(s)\|_{L^2}^2ds<\varepsilon.$$
Consequently, we obtain
$$E\|\theta_m u(t)\|_{L^2}^2\leq e^{(-2k+al^2)t}\|\theta_m u_0\|_{L^2}^2+\frac{2\varepsilon}{(2k-al^2)}+\varepsilon, $$
where $-2k+al^2<0$.
Thus, there exists $L_4=L_4(\varepsilon)\geq L_3$ such that for all $m\geq L_4$,
$$E\|\theta_m u(t)\|_{L^2}^2\leq 2\varepsilon+\frac{2\varepsilon}{2k-al^2}.$$
This implies that
\begin{align*}
E\int_{|x|\geq m}|u(t)|^2dx\leq &E\int_{|x|\geq m}|u(t)|^2dx+E\int_{\frac{m}{2}\leq|x|\leq m}|\theta_m u(t)|^2dx \\
=&E\int_{\R} |\theta_m u(t)|^2dx
 \leq 2\varepsilon+\frac{2\varepsilon}{2k-al^2},
\end{align*}
which completes the proof.
\end{proof}
\subsection{Existence of invariant measures}
To employ the Krylov-Bogolioubov theorem, we will proceed to establish the Feller property of the semigroup $p_{_{t}}$ associated with the solution $u(t,x)$.
\begin{lem}\label{con}
For any $u_{01}$, $u_{02}\in L^2(\R)$ and $N>0$, define
\begin{equation}
\tau_{N}^{i}:=\inf\{t\geq0:\|u(t,u_{0i})\|_{L^2}>N\}, i=1,2,
\end{equation}
and
$$\tau_N=\tau_N^1\wedge\tau_N^2.$$
Assume $(H1)$ and $(H2)$ hold, then
$$ E\|u(t\wedge \tau_N,u_{01})-u(t\wedge \tau_N,u_{02})\|_{L^2}^2\leq C(t,N,k,a,L)\|u_{01}-u_{02}\|_{L^2}^2.$$
\end{lem}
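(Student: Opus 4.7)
The plan is to estimate the difference $w:=u(\cdot,u_{01})-u(\cdot,u_{02})$ via the mild formulation, exploiting the fact that both solutions have $L^2$-norm at most $N$ on $[0,\tau_N]$. The cleanest route is to work with the globally defined truncated solutions $u_i^N$ from Lemma \ref{lo} (which satisfy $\|\pi_N u_i^N\|_{L^2}\le N$ pathwise for all $t\in[0,T]$), since by local uniqueness $u(\cdot,u_{0i})$ coincides with $u_i^N$ on $[0,\tau_N^i]$. Consequently $w(t\wedge\tau_N)=w^N(t\wedge\tau_N)$ with $w^N:=u_1^N-u_2^N$, and it suffices to prove the estimate for $w^N$.

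Taking $L^2$-norms in $x$ in the mild form of $w^N$, I would estimate the four pieces using the earlier lemmas and the global Lipschitz constant $C$ of $\pi_N$: the initial piece is bounded by $\|u_{01}-u_{02}\|_{L^2}$ via Young's convolution (with $\|G(t,\cdot)\|_{L^1}=1$); the damping piece by $kC\int_0^t\|w^N(s)\|_{L^2}\,ds$ via Lemma \ref{vv}; the Burgers piece, via the factorization $(\pi_N u_1^N)^2-(\pi_N u_2^N)^2=(\pi_N u_1^N+\pi_N u_2^N)(\pi_N u_1^N-\pi_N u_2^N)$, Cauchy--Schwarz, and the cap $\|\pi_N u_i^N\|_{L^2}\le N$, has $L^1$-norm at most $2CN\|w^N(s)\|_{L^2}$, so Lemma \ref{ww} with $p=1$ yields the weakly singular bound $CN\int_0^t(t-s)^{-3/4}\|w^N(s)\|_{L^2}\,ds$; finally, for the stochastic piece, It\^o's isometry together with $(H2)$, $\|e_j\|_{L^\infty}\le 1$, and Young's inequality $\|G(t-s,\cdot)\ast f\|_{L^2}\le\|f\|_{L^2}$ gives $E\|\cdot\|_{L^2}^2\le aC^2L^2\int_0^t E\|w^N(s)\|_{L^2}^2\,ds$. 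Squaring via $(a+b+c+d)^2\le 4(a^2+b^2+c^2+d^2)$, applying Cauchy--Schwarz to each time integral (in particular $(\int_0^t(t-s)^{-3/4}f\,ds)^2\le 4t^{1/4}\int_0^t(t-s)^{-3/4}f^2\,ds$), and taking expectation leads to
$$
\varphi(t) \le 4\|u_{01}-u_{02}\|_{L^2}^2 + C(t,N,k,a,L)\int_0^t\bigl[1+(t-s)^{-3/4}\bigr]\varphi(s)\,ds,\qquad \varphi(t):=E\|w^N(t)\|_{L^2}^2.
$$
Since the kernel exponent $3/4<1$ is weakly singular, the generalized (Henry) Gronwall inequality yields $\varphi(t)\le C(t,N,k,a,L)\|u_{01}-u_{02}\|_{L^2}^2$; the parallel sup-estimate $E\sup_{s\le t}\|w^N(s)\|_{L^2}^2\le C(t,N,k,a,L)\|u_{01}-u_{02}\|_{L^2}^2$, which dominates $E\|w^N(t\wedge\tau_N)\|_{L^2}^2$, is obtained by re-running the argument with Lemma \ref{vw} (at some $q>2$, followed by H\"older in time) in place of It\^o's isometry for the stochastic convolution.

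The main obstacle is the quadratic Burgers term: a direct It\^o-based attack on $\|w\|_{L^2}^2$ produces a random drift coefficient involving $\|u_{i,x}\|_{L^2}$ (through Agmon's inequality $\|u\|_{L^\infty}^2\le 2\|u\|_{L^2}\|u_x\|_{L^2}$), and closing the resulting Gronwall requires exponential moments of $\int_0^{t\wedge\tau_N}\|u_{i,x}\|_{L^2}\,ds$, which we do not have. The truncation route disarms this by trading the gradient factor for the weakly singular kernel of $\partial_y G$: the $L^2$-cap $\pi_N$ keeps the bilinear quantity uniformly bounded in $L^1$, and the resulting integral inequality has kernel exponent strictly below $1$, so Henry's Gronwall closes with constants depending on $t,N,k,a,L$ only.
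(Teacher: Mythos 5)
Your proposal is correct and follows essentially the same route as the paper: the same four-term decomposition of the mild formulation, the same use of Lemmas \ref{vv} and \ref{ww} (the latter with $p=1$ producing the weakly singular kernel $(t-s)^{-3/4}$), the same It\^o-isometry/Burkholder treatment of the stochastic convolution under $(H2)$, and a singular Gronwall inequality to close. The only difference is bookkeeping: you route the uniform $L^2$-cap through the truncated solutions $u_i^N$ of \eqref{eN} and then identify them with the stopped solutions, whereas the paper inserts the stopped time $t\wedge\tau_N$ directly into the mild formula and uses $\|u_i(s)\|_{L^2}\le N$ for $s\le\tau_N$; your version is, if anything, slightly more careful about evaluating the estimate at the random time.
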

\begin{proof}
Write $u_1(t,x)=u(t,x,u_{01})$ and $u_2(t,x)=u(t,x,u_{02})$. Set $t_{N}:=t\wedge\tau_N$. From the definition of the mild solution, it follows that
\begin{align*}
u_1(t_N,x)-u_2(t_N,x)=&\int_{\R}G(t_N,x-y)(u_{01}(y)-u_{02}(y))dy\\
&+\int_0^{t_N}\int_{\R}kG(t_N-s,x-y)(u_2(s,y)-u_1(s,y))dyds\\
&+\frac{1}{2}\int_0^{t_N}\int_{\R}\frac{\partial G}{\partial y}(t_N-s,x-y)(u_1^2(s,y)-u_2^2(s,y))dyds\\
&+\int_0^{t_N}\int_{\R}G(t_N-s,x-y)(\sigma(u_1(s,y))-\sigma(u_2(s,y)))dydW(s).\\
=:&K_1(t_N,x)+K_2(t_N,x)+K_3(t_N,x)+K_4(t_N,x).
\end{align*}
For $K_1(t_N,x)$, by Young's inequality, we have
\begin{align*}
\|K_1(t_N,x)\|_{L^2}^2\leq \|G(t_N)\|_{L^1}^2\|u_{01}-u_{02}\|_{L^2}^2 \leq \|u_{01}-u_{02}\|_{L^2}^2.
\end{align*}
Applying Lemma \ref{vv}, we get
\begin{align*}
\|K_2(t_N,x)\|_{L^2}^2\leq&k^2\left(\int_0^{t_N}\|u_1(s)-u_2(s)\|_{L^2}ds\right)^2\\
\leq& C(k,t) \int_0^{t_N}\|u_1(s)-u_2(s)\|_{L^2}^2ds.
\end{align*}
By Lemma \ref{ww}, we have
\begin{align*}
\|K_3(t_N,x)\|_{L^2}^2\leq&C\left( \int_0^{t_N} (t_N-s)^{-\frac{3}{4}}\|u_1^2(s)-u_2^2(s)\|_{L^{1}} ds\right)^2\\
\leq& C(t)\|u_1(s)+u_2(s)\|_{L^2}^2 \left(\int_0^{t_N} (t_N-s)^{-\frac{3}{4}}\|u_1(s)-u_2(s)\|_{L^2}ds\right)^2\\
\leq& C(t,N)\int_0^{t_N}(t_N-s)^{-\frac{3}{4}}\|u_1(s)-u_2(s)\|^2_{L^2}ds.
\end{align*}
By Burkholder's inequality and Lipschitz property of $\sigma$, we have
\begin{align*}
&E\|K_4(t_N,x)\|_{L^2}^2\\
=&  E\left\|\int_0^{t_N}\int_{\R}G(t-s,x-y)\left(\sigma(u_1(s,y))-\sigma(u_2(s,y))\right)(s,y)dydW(s)\right\|_{L^2}^2\\
\leq&E\left\| \sum_ja_j^2\int_0^{t_N}\left(\int_{\R}G(t-s,x-y)L|u_1(s,y)-u_2(s,y)|e_jdy\right)^2ds\right\|_{L^{1}} \\
\leq& E\left\| \sum_ja_j^2\int_0^{t_N}\int_{\R}G(t-s,x-y)L^2|u_1(s,y)-u_2(s,y)|^2dyds\right\|_{L^{1}} \\
\leq&C(L,a)E\int_0^{t_N} \|G \ast |u_1(s,y)-u_2(s,y)|^2 \|_{L^1} ds\\
\leq& C(L,a) \int_0^{t_N}E\|u_1(s)-u_2(s)\|_{L^2}^2ds.
\end{align*}

Combining all the above estimates, we obtain
\begin{align*}
 &E\|u_1(t_N)-u_2(t_N)\|_{L^2}^2\\
 \leq &\|u_{01}-u_{02}\|_{L^2}^2\\
 &+ \int_0^{t_N} \left(C(k,t)+C(t,N)(t_N-s)^{-\frac{3}{4}}+C(L,a)\right)E\|u_1(s)-u_2(s)\|_{L^2}^2ds.
 \end{align*}
Then, by applying Gronwall's inequality, we derive the desired estimate.
\end{proof}

\begin{prop}
Suppose that $u_0\in L^2(\R)$ and $(H1)-(H2)$ hold. Then for every $t>0$, $(p_{_{t}})_{t\geq 0}$ associated with the solution $u(t,x,u_0)$ is a Feller semigroup.
\end{prop}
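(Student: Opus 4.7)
The plan is to show that $p_{t}\phi\in C_{b}(L^2(\R))$ whenever $\phi\in C_{b}(L^2(\R))$. Boundedness is immediate from $|p_t\phi(u_0)|\le\|\phi\|_\infty$, so the whole content of the claim is continuity: for any sequence $u_{0,n}\to u_0$ in $L^2(\R)$, I must prove $p_t\phi(u_{0,n})\to p_t\phi(u_0)$. My strategy is the standard localisation argument that turns the \emph{local} stability provided by Lemma \ref{con} into a \emph{global} continuity statement.

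First I would fix $\epsilon>0$ and localise by a stopping time. For each $n$ set $\tau_N^{1,n}=\inf\{s:\|u(s,u_{0,n})\|_{L^2}>N\}$ and $\tau_N^{2}=\inf\{s:\|u(s,u_0)\|_{L^2}>N\}$, and let $\tau_N=\tau_N^{1,n}\wedge\tau_N^{2}$ as in Lemma \ref{con}. The key uniform-in-$n$ tail bound comes from Theorem \ref{solution} with $p=2$, which yields $E\sup_{s\in[0,t]}\|u(s,u_{0,n})\|_{L^2}^2\le C(t)(1+\|u_{0,n}\|_{L^2}^2)$. Since $\{u_{0,n}\}$ is bounded in $L^2(\R)$ by convergence, Chebyshev's inequality gives $P(\tau_N^{1,n}\le t)\le C(t)(1+\sup_n\|u_{0,n}\|_{L^2}^2)/N^{2}$, uniformly in $n$, and the same holds for $\tau_N^{2}$. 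Thus I may choose $N=N(\epsilon)$ so large that $P(\tau_N\le t)\le\epsilon/(4\|\phi\|_\infty+1)$ for every $n$.

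Next I would split
\[
|p_t\phi(u_{0,n})-p_t\phi(u_0)|\;\le\;\bigl|E[\phi(u(t\wedge\tau_N,u_{0,n}))-\phi(u(t\wedge\tau_N,u_0))]\bigr|+2\|\phi\|_\infty P(\tau_N\le t),
\]
using the fact that on $\{\tau_N>t\}$ one has $u(t,u_{0,\cdot})=u(t\wedge\tau_N,u_{0,\cdot})$. The second term is already below $\epsilon/2$ by the previous step. For the first term, Lemma \ref{con} gives $E\|u(t\wedge\tau_N,u_{0,n})-u(t\wedge\tau_N,u_0)\|_{L^2}^2\le C(t,N,k,a,L)\|u_{0,n}-u_0\|_{L^2}^2\to 0$, hence convergence in probability in $L^2(\R)$. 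Passing to an arbitrary subsequence and extracting a further subsequence along which the convergence is almost sure, continuity of $\phi$ and the bounded convergence theorem give convergence of the expectation along that subsequence; the usual subsequence-of-every-subsequence argument promotes this to convergence of the full sequence. Combining both pieces yields $\limsup_n|p_t\phi(u_{0,n})-p_t\phi(u_0)|\le\epsilon/2$, and letting $\epsilon\to0$ finishes the proof.

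The main obstacle I foresee is not any single estimate but the need to align three ingredients simultaneously: the \emph{sup-in-time} $L^2$ bound (for which Lemma \ref{bound} is insufficient, so Theorem \ref{solution} with $p=2$ must be invoked), the uniform boundedness of $\{\|u_{0,n}\|_{L^2}\}$ (which lets the tail estimate be uniform in $n$), and the fact that $\phi$ is merely continuous and bounded on $L^2(\R)$, so convergence in probability together with the uniform $\|\phi\|_\infty$-cut-off is what allows dominated/bounded convergence to pass expectations through $\phi$.
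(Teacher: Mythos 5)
Your overall strategy --- localize with the stopping times of Lemma \ref{con}, control $P(\tau_N\le t)$ uniformly via an a priori moment bound, and use the local $L^2$-stability to pass to the limit inside $\phi$ --- is exactly the argument the paper uses (the paper phrases continuity as uniform continuity of $u_0\mapsto p_t\phi(u_0)$ on balls and quotes the tail estimate \eqref{tau} rather than Theorem \ref{solution}, but these are cosmetic differences). There is, however, one step that does not work as written. After localizing, you must show $E[\phi(u(t\wedge\tau_N,u_{0,n}))-\phi(u(t\wedge\tau_N,u_0))]\to 0$ for fixed $N$, and you propose to do so by extracting a subsequence along which the convergence is almost sure and then invoking continuity of $\phi$ together with bounded convergence. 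But $\tau_N=\tau_N^{1,n}\wedge\tau_N^{2}$ depends on $n$, so the comparison variable $Y_n:=u(t\wedge\tau_N,u_0)$ is itself a moving target. Knowing $\|X_n-Y_n\|_{L^2}\to 0$ a.s.\ along a subsequence does not yield $\phi(X_n)-\phi(Y_n)\to 0$ for a merely continuous $\phi$: continuity transfers convergence only toward a \emph{fixed} limit, and $\phi$ need not be uniformly continuous on a ball of $L^2(\R)$, closed balls being non-compact there. (This same point is, incidentally, glossed over in the paper's own proof, which simply asserts uniform continuity of $\phi$ on $B(0,N)$.)

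The repair uses only ingredients you already have, rearranged: first combine the uniform tail bound with Lemma \ref{con} to show that the \emph{unstopped} solutions converge in probability,
\[
P\bigl(\|u(t,u_{0,n})-u(t,u_0)\|_{L^2}>\eta\bigr)\;\le\;P(\tau_N\le t)+\eta^{-2}\,C(t,N,k,a,L)\,\|u_{0,n}-u_0\|_{L^2}^2,
\]
choosing $N$ first (uniformly in $n$) and then letting $n\to\infty$; only afterwards run the subsequence/bounded-convergence argument on $\phi(u(t,u_{0,n}))\to\phi(u(t,u_0))$, whose limit is now fixed. With this reordering your proof closes, and it in fact avoids the uniform-continuity assertion that the paper relies on.
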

\begin{proof}
Denote ${B}(0,r)$ as the closed ball $\{u(t,x)\in L^2(\R): \|u(t)\|_{L^2}\leq r\}$.
 Let $\phi\in C_{b}(L^2(\R))$.  It suffices to prove that for any $t>0$ and $r\in \mathbb{N}$,
\begin{equation}
\lim\limits_{\delta\to 0}\sup\limits_{u_{{01}},u_{{02}}\in B(0,r), \|u_{{01}}-u_{{02}}\|_{L^2}\leq \delta}|p_{_{t}}\phi(u_{{01}})-p_{_{t}}\phi(u_{{02}})|=0.
\end{equation}

For any $u_{{01}}, u_{02}\in {B}(0,r)$ and $N>r$,  as in Lemma \ref{con}, define
\begin{equation}
\tau_{N}^{i}:=\inf\{t\geq0:\|u(t,u_{0i})\|_{L^2}>N\}, i=1,2
\end{equation}
and
$$\tau_N=\tau_N^1\wedge\tau_N^2.$$
By inequality \eqref{tau}, we have
\begin{align}
&E|\phi(u(t,u_{0i}))-\phi(u(t\wedge\tau_{N},u_{0i}))| \\
&\leq 2\|\phi\|_{L^\infty}P(\tau_{N}<t)\nonumber\\
&\to 0  \qquad \text{as} \quad N\to +\infty. \nonumber
\end{align}
Then for any $\varepsilon>0$, choose  $ N>r$ sufficiently large such that for any $u_{01}, u_{02}\in {B}(0,r)$,
\begin{equation}\label{ph}
E|\phi(u(t,u_{01}))-\phi(u(t\wedge\tau_{N},u_{01}))|\leq\varepsilon.
\end{equation}
\begin{equation}\label{ph2}
E|\phi(u(t,u_{02}))-\phi(u(t\wedge\tau_{N},u_{02}))|\leq\varepsilon.
\end{equation}
For this $N$, since $\phi$ is uniformly continuous on ${B}(0,{N})$, we can choose  $\eta>0$ such that for any $v,w\in {B}(0,{N})$ with $\|v-w\|_{L^2}<\eta$,
\begin{equation}\label{phi}
|\phi(v)-\phi(w)|\leq \varepsilon.
\end{equation}
By Lemma \ref{con}, we know that
\begin{equation}\label{ut}
\|u(t\wedge \tau_{ N},u_{01})-u(t\wedge \tau_{N},u_{02})\|_{L^2}^2\leq c\|u_{01}-u_{02}\|_{L^2}^2.
\end{equation}
Thus, for any $u_{01},u_{02}\in {B}(0,r)$ with $\|u_{01}-u_{02}\|_{L^2}^2<\dfrac{\delta^{2}\varepsilon}{2c\|\phi\|_{L^\infty}}$,  by inequality \eqref{phi} and inequality \eqref{ut}, we obtain
\begin{equation}\label{E|}
\begin{aligned}
&E|\phi(u(t\wedge\tau_{ N},u_{01}))-\phi(u(t\wedge\tau_{ N},u_{02}))|\\
=&E\left[\phi(u(t\wedge\tau_{ N},u_{01}))-\phi(u(t\wedge\tau_{ N},u_{02}))I_{\{\|u(t\wedge\tau_{N},u_{01})-u(t\wedge\tau_{ N},u_{02})\|_{L^2}\leq \delta\}} \right]\\
&+E\left[\phi(u(t\wedge\tau_{ N},u_{01}))-\phi(u(t\wedge\tau_{ N},u_{02}))I_{\{\|u(t\wedge\tau_{N},u_{01})-u(t\wedge\tau_{N},u_{02})\|_{L^2}> \delta\}} \right]\\
\leq& \varepsilon +2\|\phi\|_{L^\infty}P(\|u(t\wedge\tau_{N},u_{01})-u(t\wedge\tau_{ N},u_{02})\|_{L^2}> \delta)\\
\leq& \varepsilon +2\|\phi\|_{L^\infty}\frac{1}{\delta^2}E(\|u(t\wedge\tau_{N},u_{01})-u(t\wedge\tau_{N},u_{02})\|_{L^2}^{2})\\
\leq &2\varepsilon.
\end{aligned}
\end{equation}
According to inequality \eqref{ph}, \eqref{ph2} and inequality \eqref{E|}, we have
\begin{align*}
&E|\phi(u(t,u_{01}))-\phi(u(t,u_{02}))|\\
\leq& E|\phi(u(t,u_{01}))-\phi(u(t\wedge\tau_{N},u_{01}))|+E|\phi(u(t\wedge\tau_{N},u_{01}))-\phi(u(t\wedge\tau_{ N},u_{02}))|\\
&+E|\phi(u(t\wedge\tau_{ N},u_{02}))-\phi(u(t,u_{02}))|\\
\leq& 4\varepsilon.
\end{align*}
This completes the proof.
\end{proof}

{\bf Proof of Theorem \ref{inva}.}
Based on the above results, we will prove that the probability distribution of the solution to equation \eqref{2.1} is tight. By applying  Krylov-Bogolioubov theorem, we will establish the existence of an invariant measure for equation \eqref{2.1}, thereby proving Theorem \ref{inva}.

Let $\theta$ be the smooth function given by Lemma \ref{tail} and $\theta_n(x)=\theta(\frac{x}{n})$. Denote by
\begin{equation}\label{tildeu}
\hat{u}_n(t,x)=\theta_n(x)u(t,x) \text{ and } \tilde{u}_n(t,x)=(1-\theta_n(x))u(t,x),
\end{equation}
 where $u(t,x)$ is the solution of  equation \eqref{2.1}. Then we have
$$u(t,x)=\hat{u}_n(t,x)+\tilde{u}_n(t,x).$$
Given $\varepsilon$ and $m\in \N$, by Lemma \ref{tail}, there exists an integer $n_m$ depending on $u_0, \varepsilon$ and $m$ such that for all $t\geq 1$,
$$E\left( \int_{|x|\geq \frac{1}{2}n_m}|u(t)|^2dx\right)<\frac{\varepsilon}{2^{4m}}.$$
Then we get, for all $t\geq 1$,
\begin{equation}\label{hatu}
 E(\|\hat{u}_{n_{m}}(t,x)\|_{L^2}^2)=E\int_{\R}|\theta_{n_{m}}(x)|^2 |u(t)|^2dx<\frac{\varepsilon}{2^{4m}}.
 \end{equation}
By Lemma \ref{bound},  we see that there exists $c_1=c_1(u_0)>0$ such that
\begin{align*}
\int_0^s E\|u(t)\|_{H^1}^2dt&=\int_0^s \left(E\|u(t)\|_{L^2}^2+E\|u_x(t)\|_{L^2}^2\right)dt\\
&\leq s\|u_0\|_{L^2}^2+\|u_0\|_{L^2}^2\\
&\leq c_1(s+1).
\end{align*}
Hence for all $s\in \mathbb{N}$,
\begin{equation}\label{1k}
\frac{1}{s}\int_{1}^{s+1}E\|u(t)\|_{H^1}^2dt\leq \frac{1}{s}\int_{0}^{s+1}E\|u(t)\|_{H^1}^2dt\leq \frac{c_1(s+2)}{s}\leq 3c_1.
\end{equation}
By \eqref{tildeu}, we find that there exists a positive constant $c_2$ independent of $n$ such that for all $n\in \mathbb{N}$,
\begin{equation}\label{un}
\|\tilde{u}_n(t)\|_{H^1}^2\leq c_2\|u(t)\|_{H^1}^2.
\end{equation}

For every $m \in \mathbb{N}$, set
\begin{equation}\label{Ym}
Y_m=\left\{u\in H^1(\R): u(x)=0 \text{ for } |x|\geq n_m \text{ and } \|u(t)\|_{H^1}\leq\frac{2^{m}\sqrt{3c_1c_2}}{\sqrt{\varepsilon}}\right\},
\end{equation}
where $c_1$, $c_2$ are the constants in \eqref{1k} and \eqref{un}, respectively.
In terms of $Y_m$, we define
$$Z_m=\left\{u\in L^2(\R), \|u-v\|_{L^2}\leq \frac{1}{2^{m}}\text{ for some } v\in Y_m \right\}.$$
Then $Y_m$ is precompact in $L^2(\R)$ and $Z_m$ is a closed subset of $L^2(\R)$.
Therefore we have
\begin{align*}
&P\left(\{\omega\in \Omega: u(t,x)\notin Z_m\}\right)\\
\leq& P\left(\{\omega\in \Omega: \tilde{u}_{n_m}(t)\notin Y_m\}\cup \{\tilde{u}_{n_m}(t)\in Y_m  \text{ and } u(t)\notin Z_m \}\right)\\
\leq& P\left(\{\omega\in \Omega:  \tilde{u}_{n_m}(t)\notin Y_m\}\right)+P\left(\{\omega\in \Omega:  \|\hat{u}_{n_m}(t)\|_{L^2}^2>\frac{1}{2^{2m}}\}\right).
\end{align*}
By \eqref{un} and \eqref{Ym} we get
\begin{align*}
P(\{\omega\in \Omega: \tilde{u}_{n_m}(t)\notin Y_m\})\leq &P\left(\{\omega\in \Omega: \|\tilde{u}_{n_m}(t)\|_{H^1}^2>\frac{2^{2m}3c_1c_2}{\varepsilon}\}\right)\\
\leq &\frac{\varepsilon}{2^{2m}3c_1c_2}E\|\tilde{u}_{n_m}(t)\|_{H^1}^2\\
\leq& \frac{\varepsilon}{2^{2m}3c_1}E\|u(t)\|_{H^1}^2.
\end{align*}
By \eqref{hatu} we have for all $t\geq 1$,
\begin{align*}
P\left(\{\omega\in \Omega: \|\hat{u}_{n_m}(t)\|_{L^2}^2>\frac{1}{2^{2m}}\}\right)\leq 2^{2m}E\|\hat{u}_{n_m}(t)\|_{L^2}^2<\frac{\varepsilon}{2^{2m}}.
\end{align*}
Then we obtain that for all $t\geq 1$,
\begin{equation}\label{uzm}
P\left(\{\omega\in \Omega: u(t,x)\notin Z_m\}\right)\leq \frac{\varepsilon}{2^{2m}}+\frac{\varepsilon}{2^{2m}3c_1}E\|u(t)\|_{H^1}^2.
\end{equation}
Let $Z_{\varepsilon}=\bigcap_{m=1}^{\infty}Z_m$. Then $Z_\varepsilon$ is a totally bounded closed subset of $L^2(\R)$, and hence compact.
By inequality \eqref{1k} and inequality \eqref{uzm}, we have
\begin{align*}
\frac{1}{s}\int_{1}^{s+1}P(\omega\in \Omega: u(t)\notin Z_{\varepsilon})dt
\leq &\frac{1}{s}\int_{1}^{s+1} \sum_{m=1}^{\infty}P(\omega\in \Omega: u(t)\notin Z_m )dt\\
\leq& \frac{1}{s}\int_{1}^{s+1}\sum_{m=1}^{\infty}\left(\frac{\varepsilon}{2^{2m}}+\frac{\varepsilon}{2^{2m}3c_1}E\|u(t)\|_{H^1}^2\right)dt\\
<&{\varepsilon}.
\end{align*}
Consequently, for all $s\in \N$,
\begin{align*}
\mu_s&=\dfrac{1}{s}\displaystyle\int_{1}^{s+1} p(t,u_0, Z_\varepsilon )dt =\dfrac{1}{s}\displaystyle\int_{1}^{s+1}P\left(\omega\in \Omega:u(t)\in Z_\varepsilon\right)dt>1-\varepsilon,
\end{align*}
which shows that the sequence $\{\mu_s\}_{s=1}^{\infty}$ is tight on $L^2(\R)$.
Hence there exists a a subsequence $ s_{n} \to +\infty$ such that
$$\mu_{s_n}\to \mu$$
weakly for some probability measure $\mu$ on $L^2(\R)$ with $s_n\to \infty$.  By the Krylov-Bogolioubov theorem, $\mu$ is invariant for equation \eqref{2.1}. The proof is complete.
$\hfill\square$
\section*{Acknowledgements}
This work is supported by NSFC (Grant 11925102), and LiaoNing Revitalization Talents Program (Grant XLYC2202042).


\begin{thebibliography}{100}

\bibitem{As}
S. Assing and R. Manthey,
 Invariant measures for stochastic heat equations with unbounded coefficients.
{\it Stochastic Process. Appl.} {\bf 103} (2003), 237--256.

\bibitem{Ba}
Y. Bakhtin and L. Li,
Thermodynamic limit for directed polymers and stationary solutions of the Burgers equation.
{\it Comm. Pure Appl. Math.} {\bf 72} (2019), 536--619.

\bibitem{Be}
L. Bertini, N. Cancrini and G. Jona-Lasinio,
The stochastic Burgers equation.
{\it Comm. Math. Phys.} {\bf 165} (1994), 211--232.

\bibitem{Br}
Z. Brze\'{z}niak, E. Motyl and M. Ondrejat, Invariant measure for the stochastic Navier-Stokes equations in unbounded 2D domains.
{\it Ann. Probab.} {\bf 45} (2017), 3145--3201.

\bibitem{Brz}
Z. Brze\'{z}niak, M. Ondrej\'{a}t and J. Seidler, Invariant measures for stochastic nonlinear beam and wave equations.
{\it J. Differential Equations} {\bf 260} (2016), 4157--4179.

\bibitem{t}
 G. Da Prato, A. Debussche and R. Temam, Stochastic Burgers' equation.
{\it NoDEA Nonlinear Differential Equations Appl.} {\bf 1} (1994), 389--402.


\bibitem{af}
G. Da Prato and D. G\c{a}tarek,
Stochastic Burgers equation with correlated noise.
{\it Stochastics Stochastics Rep.} {\bf 52} (1995), 29--41.

\bibitem{q}
Z. Dong and R. Zhang, Ergodicity for a class of semilinear stochastic partial differential equations.
{\it Math. Methods Appl. Sci.} {\bf 43} (2020), 2117--2136.

\bibitem{j}
A. Dunlap, C. Graham and L. Ryzhik, Stationary solutions to the stochastic Burgers equation on the line.
{\it Comm. Math. Phys.} {\bf 382} (2021), 875--949.

\bibitem{e}
W. E, K. Khanin, A. Mazel and  Ya. Sinai,
Invariant measures for Burgers equation with stochastic forcing.
{\it Ann. of Math. (2)} {\bf 151} (2000), 877--960.

\bibitem{Ec}
J.-P. Eckmann and M. Hairer,
Invariant measures for stochastic partial differential equations in unbounded domains.
 {\it Nonlinearity} {\bf 14} (2001), 133--151.

\bibitem{Fr}
A. Friedman, {\it Partial Differential Equations of Parabolic Type}, Prentice-Hall. Inc., Englewood Cliffs, NJ, 1964.


\bibitem{Go}
B. Goldys and B. Maslowski,
Exponential ergodicity for stochastic Burgers and 2D Navier-Stokes equations.
{\it J. Funct. Anal.} {\bf 226} (2005), 230--255.

\bibitem{f}
I. Gy\"{o}ngy,
Existence and uniqueness results for semilinear stochastic partial differential equations.
{\it Stochastic Process. Appl.} {\bf 73} (1998), 271--299.

\bibitem{g}
I. Gy\"{o}ngy and D. Nualart,
 On the stochastic Burgers' equation in the real line.
{\it Ann. Probab.} {\bf 27} (1999), 782--802.

\bibitem{h}
I. Gy\"{o}ngy and C. Rovira,
On $L^p$-solutions of semilinear stochastic partial differential equations.
{\it Stochastic Process. Appl.} {\bf 90} (2000), 83--108.

\bibitem{Ki}
J. U. Kim, Invariant measures for a stochastic nonlinear Schr\"{o}dinger equation.
{\it Indiana Univ. Math. J.} {\bf 55} (2006), 687--717.

\bibitem{o}
J. U. Kim,
 On the stochastic Burgers equation with a polynomial nonlinearity in the real line.
{\it Discrete Contin. Dyn. Syst. Ser. B}  {\bf 6} (2006), 835--866.

\bibitem{Kr}
N. V. Krylov,
It\^{o}'s formula for the $L_p$-norm of stochastic $W^1_p$-valued processes.
{\it Probab. Theory Related Fields}  {\bf 147} (2010), 583--605.

\bibitem{Mi}
 O. Misiats, O. Stanzhytskyi and N. K. Yip,
 Existence and uniqueness of invariant measures for stochastic reaction-diffusion equations in unbounded domains.
{\it J. Theoret. Probab.} {\bf 29} (2016), 996--1026.

 \bibitem{Tes}
G. Tessitore and J. Zabczyk, Invariant measures for stochastic heat equations.
{\it Probab. Math. Statist.} {\bf 18} (1998), 271--287.

\bibitem{Wang}
B. Wang,
Dynamics of fractional stochastic reaction-diffusion equations on unbounded domains driven by nonlinear noise.
{\it J. Differential Equations} {\bf 268} (2019), 1--59.



\end{thebibliography}
\end{document}